\newtheorem{statement}{}
\newtheorem{theorem}[statement]{Theorem}
\newtheorem{lemma}[statement]{Lemma}
\newtheorem{proposition}[statement]{Proposition}
\def\R{\mathbb R}
\def\Q{\mathbb Q}
\def\C{\mathbb C}
\def\N{\mathbb N}
\def\dens{\, {\rm dens}\, }
\def\la{\langle}
\def\ra{\rangle}
\def\nn{n\in\N}
\def\ee{\varepsilon}
\def\aa{\alpha}
\def\gg{\gamma}
\def\nn{\|\cdot\|}
\def\kk{\!\times\!}
\def\la{\langle}
\def\ra{\rangle}
\def\ee{\varepsilon}
\def\oo{{\sqsubset\!\!\!\sqsupset}}
\def\AA{\mathcal A}
\def\SS{\mathcal S}
\def\PP{\mathcal P}
\def\WW{\mathcal W}
\def\RR{\mathcal R}
\def\setsep{:\;}
\def\Re{\operatorname{Re}}
\def\card{{\rm card}\,}
\def\closedSpan{\overline{\rm sp}\,}
\def\qespan{{\rm sp}_\mathbb{Q}}
\def\qeispan{{\rm sp}_{\mathbb{Q} + i\mathbb{Q}}}
\theoremstyle{remark}
\newtheorem*{claim}{Claim}
\def\supp{\operatorname{supp}}
\begin{document}
\author{Marek C\'uth, Mari\'an Fabian}
\title{Rich families and projectional skeletons in Asplund WCG spaces}
\address[M.~C\' uth]{Charles University, Faculty of Mathematics and Physics, Sokolovsk\'a 83, 186 75 Praha 8 Karl\'{\i}n, Czech Republic}
\address[M.~Fabian]{Mathematical Institute of Czech Academy of Sciences, \v Zitn\'a 25, 115 67 Praha 1, Czech Republic}
\email{cuth@karlin.mff.cuni.cz}
\email{fabian@math.cas.cz}
\subjclass[2010]{46B26, 46B20}
\keywords{Real or complex Banach space, Asplund space, weakly compactly generated space, weakly Lindel\"of determined space, $1$-Plichko space, commutative projectional skeleton, projectional generator, rich family}
\dedicatory{Dedicated to the memory of Jonathan M. Borwein.}
\begin{abstract}
We show a way of constructing projectional skeletons using the concept of rich families in Banach spaces which admit a projectional generator.
Our next result is that a Banach space $X$ is Asplund and weakly compactly generated if and only if there exists a commutative 1-projectional skeleton $(Q_\gamma:\ \gamma\in\Gamma)$ on $X$
such that $(Q_\gamma{}^*:\ \gamma\in\Gamma)$ is a commutative 1-projectional skeleton on $X^*$.
We consider both, real and also complex, Banach spaces.
\end{abstract}
\maketitle

Systems of bounded linear projections on Banach spaces are an important tool for the study of structure of non-separable Banach spaces. They 
sometimes enable us to transfer properties from smaller (separable) spaces to larger ones.
One such concept is a \emph{projectional resolution of the identity} (PRI, for short); see, e.g. \cite[page 103]{hajek} and \cite[page 106]{f} for a definition and results on constructing a PRI in various classes of Banach spaces. A PRI is often constructed via a projectional generator (PG, for short), a technical tool from which the existence of a PRI follows (see e.g. {\cite[Theorem 3.42]{hajek}}).

Recently, W. Kubi\'s introduced in \cite{ku} a concept of projectional skeleton, which provides a bit better knowledge of the Banach space in question.
Spaces with a 1-projectional skeleton have a PRI with an additional property that the range of each projection from this PRI has again a 1-projectional skeleton \cite[Theorem 17.6]{kubisKniha}. Consequently, an inductive argument works well when ``putting separable pieces from PRI together'' and thus
we can prove that those spaces inherit certain properties of separable spaces. 
For example, every space with a projectional skeleton has an equivalent LUR renorming and admits a bounded injective linear operator into $c_0(\Gamma)$ for some set $\Gamma$ \cite[Corollary 17.5]{kubisKniha}.

W. Kubi\'s proved, using a set-theoretical method of suitable models, that every space which admits a PG not only admits a PRI, but also a projectional skeleton, see \cite[Proposition 7]{ku}. The class of Banach spaces admitting a PG is quite large. It includes weakly compactly generated spaces (WCG, for short), 
a bigger class of weakly $\mathcal K$-countably determined spaces, i.e. Va\v s\'ak spaces, yet a bigger class of weakly Lindel\"of determined spaces (WLD, for short), yet a bigger class of Plichko spaces (in particular, duals to $C^*$ algebras are such), and duals of Asplund spaces, see \cite {f}, \cite[page 166]{hajek} and \cite[Corollary 1.3]{bhk}.  
Under some extra conditions, the class of WLD spaces coincides with that of WCG spaces.
This is the case if $X$ is Asplund or if $X$ is isomorphic to the dual of an arbitrary $C^*$ algebra, see \cite[Theorem 8.3.3]{f} and \cite[Theorem 1.1]{bhk}.

In this note, we present a construction of a projectional skeleton from the existence of a PG, using rich families instead of suitable models, and thus making it more accessible for non-experts in set-theory; see Proposition \ref{p:main}.
Moreover, we give a further insight to the projectional skeleton on the dual of an Asplund space, see Theorem~\ref{t:asplund}. 
This strengthens \cite[Theorem 2.3]{cf2}. Next, in every Asplund space $(X,\|\cdot\|)$ which is WCG, we construct, using suitable rich families,
a commutative 1-projectional skeleton $(Q_\gamma:\ \gamma\in\Gamma)$ on $X$ such that $(Q_\gamma{}^*:\ \gamma\in\Gamma)$ is a commutative 1-projectional skeleton on 
$(X^*,\|\cdot\|)$, see Theorem~\ref{t:Asplund+WCG}. As a consequence, this yields a classical fact 
that has been known already for three decades that every Asplund space $(X,\|\cdot\|)$ which is WCG admits a projectional resolution of the identity $(Q_\alpha:\ \omega\le\alpha\le\mu)$ such that $(Q_\alpha{}^*:\ \omega\le\alpha\le\mu)$ is a projectional resolution of the identity on $(X^*,\nn)$;
for details see \cite[Proposition 6.1.10]{f}. Moreover, we characterize Banach spaces which are both WCG and Asplund using the notion of a projectional skeleton, see Theorem~\ref{t:dodatek}. This class of Banach spaces is quite large. In particular, $C(K)$ spaces, where a compactum $K$ is both scattered and Eberlein, are such. Also, every M-embedded space is such, see \cite[Theorems III.3.1 and III.4.6]{hww}. For numerous examples of M-embedded spaces see \cite[Examples III.1.4]{hww}.

\section{preliminaries}

Let $\R$ and $\C$ denote the field of real and complex numbers, respectively.
A Banach space over $\R$ or $\C$ is called \emph{real Banach space} or \emph{complex Banach space}, respectively. If we speak about just a Banach space, it means that related reasonings work for both cases. Below we gather most relevant notions, definitions and notation.
If $X$ is a complex Banach space, we denote by $X_{\R}$ the space X 
where the multiplication of vectors by just real numbers is considered, and we endow it by the norm inherited from $X$. 
Thus $X_\R$ becomes a real Banach space.
The set of rational numbers is denoted by $\Q$. For an infinite set $M$ the symbol $[M]^{\,\le\omega}$ means the family of all at most countable subsets of $M$.
Let $(X,\|\cdot\|)$ be a real or complex Banach space with the topological dual $(X^*,\|\cdot\|)$. 
For $x\in X^*$ and $x^*\in X^*$ the number $x^*(x)$ is sometimes denoted as $\la x^*,x\ra$. The adjective \it linear \rm means the stability under the operation $+$ and multiplication by elements from $\R$ or $\C$.
The symbol $B_X$ means the closed unit ball of $X$. For a set $A\subset X$, the symbols, sp$\, A$, $\closedSpan A$, $\qespan A$ and $\qeispan A$
 mean the linear span of $A$, the norm-closed linear span of $A$ and the set consisting of all finite linear combinations of elements from $A$ with coefficients from $\Q$ and coefficients from the set $\Q + i\Q$, respectively. 
 Further, for $A\subset X^*$ the symbol $\overline{A}^{\, w^*}$ denotes the weak$^*$ closure of $A$. By a \it projection \rm in $X$ we mean a bounded linear operator $P:X\to X$ such that $P\circ P = P$. 
 (Hence, if $X$ is complex, we require that $P(\lambda x)=\lambda Px$ for all $\lambda\in\C$ and $x\in X$.) Given $r\ge1$, a set $D\subset X^*$ is called $r$-\textit{norming} if
$$
\|x\| \leq r\!\cdot\!  \sup\big\{|x^*(x)|\setsep x^*\in D\cap B_{X^*}\big\}\quad\hbox{for every}\quad x\in X.
$$
We say that a set $D\subset X^*$ is \it norming \rm if it is $r$-norming for some $r\geq 1$.
For $Y\subset X^*$ and $V\subset X$ we put $Y_\bot := \{x\in X\setsep \forall y\in Y\;y(x) = 0\}$ and $V^\bot := \{x^*\in X^*\setsep \forall v\in V\;x^*(v) = 0\}$.


A partially ordered set is called $\sigma$-\textit{complete} if every increasing sequence in it
admits a supremum. A \textit{projectional skeleton} in the Banach space $(X,\|\cdot\|)$ is a family of projections $\big(P_s:\ s\in\Gamma\big)$ on $X$, indexed by an up-directed $\sigma$-complete partially ordered set $(\Gamma,\le)$, such that
\begin{enumerate}[\upshape (i)]
	\item $P_s X$ is separable for every $s\in\Gamma$,
	\item $X = \bigcup_{s\in\Gamma}P_s X$,
	\item $P_t\circ P_s= P_s=P_s\circ P_t$ whenever $s,t\in\Gamma$ and $s\leq t$, and
	\item Given a sequence $s_1 < s_2 < \cdots$ in $\Gamma$ and $t: = \sup_{n\in\N}s_n$, then $P_t X = \overline{\bigcup_{n\in\N}P_{s_n}X}$.
\end{enumerate}
For $r\geq 1$, we say that $\big(P_s:\ {s\in\Gamma}\big)$ is an \textit{r-projectional skeleton} if it is a projectional skeleton and $\|P_s\|\leq r$ for every $s\in\Gamma$. 
We say that a projectional skeleton $\big(P_s:\ s\in\Gamma\big)$ is \emph{commutative } if $P_t\circ P_s= P_s\circ P_t$ for every $s, t\in \Gamma$.

Let $X$ be a Banach space. By $\SS(X)$ we denote the family of all closed separable subspaces of it.
(Recall that, in the case of complex $X$, every element of $E\in\SS(X)$ satisfies $iE=E$.)
Note that $\big(\SS(X)$,``$\subset$''$\big)$ is an up-directed $\sigma$-complete partially ordered set. 
Of big importance in the sequel 
is the concept of a rich family. This instrument was for the first time articulated 
by J.M. Borwein and W. Moors in \cite{bm}. We say that a family $\RR\subset\SS(X)$ is \textit{rich} in $X$ if
(i) it is \textit{cofinal}, i.e., for every $V\in\SS(X)$ there is a $V'\in\RR$
with $V'\supset V$; and (ii) it is $\sigma$-{\it closed}, i.e., whenever $V_1, V_2,\ldots$ 
is an increasing sequence in $\RR$, then $\overline{\bigcup V_i}\in\RR$. 
Note that once $\RR$ is a rich family in $X$, then the partially ordered set $(\RR,$``$\subset$'') is up-directed and $\sigma$-complete.
Following \cite{cf2}, for two Banach spaces $X, Z$, we denote by $\SS_\oo(X\!\times\! Z)$ the family
of all ``{\it rectangles}'' $V\!\times\! Y$ where $V\in\SS(X)$ and $Y\in\SS(Z)$;
clearly, this is a rich family in $X\!\times\!Z$. (Let us note that if $X, Z$ are complex spaces, then
we consider {only} rectangles $V\times Y$ such that $iV=V$ and $iZ=Z$.)

A \it projectional generator \rm in a Banach space $X$ is a couple $\langle D,\Phi\rangle$ such that
$D$ is a norming closed linear subspace of $X^*$ and $\Phi: D\longrightarrow[X]^{\le\omega}$ is a mapping 
such that for every $E\in[D]^{\le\omega}$, with $\overline E$ linear, $\Phi(E)^{\bot}\cap{\overline E}^{\,w^*}=\{0\}$.
We say that \emph{$X$ admits a PG with domain $D$} if there exists a projectional generator $\langle D,\Phi\rangle$ in $X$.

For a set $M$ in a topological space, $\dens M$ is the smallest cardinal $\kappa$ such that $M$ has a dense subset of cardinality $\kappa$.
A \emph{projectional resolution of the identity} (PRI, for short) on a Banach space $(X,\|\cdot\|)$ is a family $(P_\alpha\setsep \omega\leq\alpha\leq\dens X)$ of projections on $X$ such that $P_\omega = 0$, $P_{\dens X}$ is the identity mapping, and for all $\omega\leq\alpha\leq\dens X$ the following hold:
\begin{enumerate}[\upshape (i)]
	\item $\|P_\alpha\| = 1$\text{ and }$\dens P_\alpha X \leq \card\alpha$,
	\item $P_\alpha \circ P_\beta = P_\beta \circ P_\alpha = P_\alpha$ whenever $\beta\in[\alpha,{\rm dens}\, X]$\text{, and}
	\item $\overline{\bigcup_{\beta < \alpha} P_{\beta + 1}X} = P_\alpha X$ whenever $\alpha>\omega$.
\end{enumerate}
We say that a class $\mathcal{C}$ of Banach spaces is a \emph{$\PP$-class} if, for every $X\in\mathcal{C}$, there exists a PRI $(P_\alpha\setsep \omega\leq\alpha\leq\dens X)$ such that $(P_{\alpha + 1} - P_\alpha)(X)\in\mathcal{C}$ for every $\alpha \in [\omega,\dens X)$ \cite[page 107]{hajek}.
Given an $r\ge1$, we say that a family $(P_\alpha\setsep \omega\leq\alpha\leq\dens X)$ of projections on $(X,\|\cdot\|)$ is $r$-PRI for some $r\geq 1$ if it satisfies all the conditions of a PRI with the exception that instead of $\|P_\alpha\| = 1$ we require that $\|P_\alpha\| \leq r$ for every $\alpha\in(\omega,\dens X)$.

A family of pairs $\{(x_i,x_i^*)\}_{i\in I}$ in $X\times X^*$ is called a \emph{Markushevich basis} in $X$ if $\closedSpan \{x_i\setsep i\in I\} = X$, 
if ${\rm sp}\, \{x_i^*\setsep i\in I\}$ is weak$^*$ dense in $X^*$, and if $x_i^*(x_j) = \delta_{i,j}$, where $\delta_{i,j}$ is the Kronecker delta. 

Finally, a real Banach space is called \it Asplund \rm if every convex continuous function 
defined on an open convex subset $\Omega$ of it is Fr\'echet differentiable at each point of a
dense subset of $\Omega$. 
We say that a complex Banach space $X$ is \it Asplund \rm if its ``real companion''
$X_{\R}$ is Asplund. For readers not familiar with differentiability we recall that \sl a real or complex Banach
space is Asplund if and only if every separable subspace of it has separable dual, \rm see Theorem~\ref{t:asplundRichFamily}. 



\section{Technicalities concerning the complex case}

Since there are natural examples of {\tt complex} Banach spaces with a projectional skeleton (e.g. duals of $C^*$ algebras), we believe that it is useful 
to prove our results also in the complex case. In order to do so, we need to show that certain earlier results hold also in this case.

If $a\in\C$, then $\Re a$ means the real part of $a$. Let $(X,\nn)$ be a complex Banach space. 
We define the mapping $\Re_X:X^*\to (X_{\R})^*$ by $\Re_X (x^*)(x): = \Re x^*(x),\ x\in X,\ x^*\in X^*$. 

\begin{proposition}\label{p:complexBasic}
{\rm \cite[Proposition 2.1]{kalendaComplex}} Let $(X,\nn)$ be a complex Banach space. 
Then $\Re_X:X^*\longrightarrow (X_{\R})^*$ is a real-linear isometry onto and it is a weak$^*$-to-weak$^*$ homeomorphism. 
Moreover, for each $x^*\in X^*$and $x\in X$ we have $x^*(x) = \Re_X(x^*)(x) - i\Re_X(x^*)(ix) = \Re_X(x^*)(x) - i\Re_X(ix^*)(x)$, 
and for each $f\in (X_{\R})^*$ and each $x\in X$ we have $\Re_X{}^{-1}(f)(x) = f(x) - if(ix)$.
\end{proposition}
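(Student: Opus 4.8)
The plan is to verify the four assertions — that $\Re_X$ is well-defined into $(X_\R)^*$ and real-linear, that it is an isometry, that it is onto with the stated inverse formula, and that it is a weak$^*$-to-weak$^*$ homeomorphism — essentially by hand, since $\Re_X$ is an explicit map. First I would check that $\Re_X(x^*)$ actually lies in $(X_\R)^*$: additivity of the real-part operation gives additivity of $\Re_X(x^*)$, the identity $\Re\big(x^*(tx)\big) = t\,\Re\big(x^*(x)\big)$ for $t\in\R$ gives real-homogeneity, and $|\Re x^*(x)| \le |x^*(x)| \le \|x^*\|\,\|x\|$ gives continuity together with $\|\Re_X(x^*)\| \le \|x^*\|$. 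Real-linearity of the assignment $x^*\mapsto\Re_X(x^*)$ is immediate from the same properties of $\Re$.

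For the isometry only the reverse inequality $\|x^*\|\le\|\Re_X(x^*)\|$ requires a small idea: given $x\in B_X$ choose $\theta\in\R$ with $e^{i\theta}x^*(x) = |x^*(x)|$; then $|x^*(x)| = \Re\big(x^*(e^{i\theta}x)\big) = \Re_X(x^*)(e^{i\theta}x)$ and $e^{i\theta}x\in B_X$, so taking the supremum over $x\in B_X$ yields $\|x^*\|\le\|\Re_X(x^*)\|$.

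For surjectivity I would, given $f\in(X_\R)^*$, set $x^*(x):=f(x)-if(ix)$ and check that $x^*\in X^*$: additivity and real-homogeneity are clear, while $x^*(ix) = f(ix) - if(-x) = f(ix) + if(x) = i\big(f(x) - if(ix)\big) = i\,x^*(x)$, so (with real-homogeneity) $x^*$ is complex-linear, and $|x^*(x)|\le 2\|f\|\,\|x\|$ gives boundedness. Since $f(x)$ and $f(ix)$ are real, $\Re_X(x^*)(x) = f(x)$, hence $\Re_X(x^*)=f$; this proves ``onto'' and simultaneously gives $\Re_X^{-1}(f)(x) = f(x)-if(ix)$. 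The two displayed identities for $x^*(x)$ then follow by writing $x^*(x) = a+ib$ with $a,b\in\R$: indeed $\Re_X(x^*)(x) = a$ and $\Re_X(x^*)(ix) = \Re\big(i\,x^*(x)\big) = -b = \Re\big((ix^*)(x)\big) = \Re_X(ix^*)(x)$, so $\Re_X(x^*)(x) - i\Re_X(x^*)(ix) = a-i(-b) = x^*(x)$.

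Finally, for the topological statement I would use that the weak$^*$ topology of $X^*$ is the coarsest one making every evaluation $x^*\mapsto x^*(x)$, $x\in X$, continuous, and likewise on $(X_\R)^*$ with $f\mapsto f(x)$, $x\in X_\R$ (the same underlying set). The identity $x^*(x) = \Re_X(x^*)(x) - i\Re_X(x^*)(ix)$ exhibits each such evaluation on $X^*$ as a fixed linear combination of two weak$^*$-continuous functionals on $(X_\R)^*$ precomposed with $\Re_X$, so $\Re_X$ is weak$^*$-to-weak$^*$ continuous; symmetrically $\Re_X^{-1}(f)(x) = f(x)-if(ix)$ shows $\Re_X^{-1}$ is weak$^*$-to-weak$^*$ continuous. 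Hence $\Re_X$ is a weak$^*$-to-weak$^*$ homeomorphism. I expect no genuine obstacle in any of this; the only thing to keep straight is the bookkeeping that identifies $X$ with $X_\R$ as one underlying set when comparing the two weak$^*$ topologies, and the convention that $(ix^*)(x)$ denotes $i\cdot x^*(x)$.
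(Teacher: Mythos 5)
The paper itself gives no argument for this proposition: it is quoted verbatim from Kalenda \cite[Proposition 2.1]{kalendaComplex}, so any complete direct verification is acceptable, and yours is the standard one. The algebraic parts of your write-up are correct and complete: well-definedness and real-linearity, the rotation trick $e^{i\theta}x^*(x)=|x^*(x)|$ for the reverse norm inequality, the construction $x^*(x):=f(x)-if(ix)$ with the check $x^*(ix)=ix^*(x)$ for surjectivity and the inverse formula, and the computation of the two displayed identities via $x^*(x)=a+ib$.

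One step is argued in the wrong direction, though the fix is a one-liner. Writing $h_x(x^*):=x^*(x)$ and $g_y(f):=f(y)$, the identity $x^*(x)=\Re_X(x^*)(x)-i\Re_X(x^*)(ix)$ says $h_x=(g_x-ig_{ix})\circ\Re_X$, i.e.\ $h_x\circ\Re_X^{-1}=g_x-ig_{ix}$ is weak$^*$-continuous on $(X_\R)^*$; this proves continuity of $\Re_X^{-1}$ (it is in fact the same statement as your ``symmetric'' second half), not continuity of $\Re_X$. Concluding continuity of $\Re_X$ from it would require the pulled-back topology to be \emph{coarser} than the weak$^*$ topology of $X^*$, whereas the factorization only gives the opposite inclusion. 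The correct argument for that direction is even simpler: for each $y\in X_\R$ one has $g_y\circ\Re_X(x^*)=\Re\big(x^*(y)\big)$, the real part of the weak$^*$-continuous evaluation $h_y$, hence weak$^*$-continuous; since the weak$^*$ topology of $(X_\R)^*$ is initial with respect to the $g_y$, the map $\Re_X$ is weak$^*$-to-weak$^*$ continuous. With that sentence inserted, your proof is complete and self-contained, which is arguably more informative than the paper's bare citation.
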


Of some importance will be the following statement.

\begin{proposition}\label{l:richSubspace} Let $X$ be a complex Banach space. Then the family
$$ 
\RR:= \big\{V\!\times\! Y\in \SS_\oo(X_{\R}\!\times\! X_{\R}{}^*):\ iV=V \ \ {\it and}\ \  i\Re_X\!{}^{-1}(Y)={\Re_X}\!^{-1}(Y)\big\}
$$
is rich in $X_{\R}\!\times \! X_{\R}{}^*$ and for every rich subfamily $\RR'\subset\RR$ the family
$\big\{V\!\times\!\Re_X\!{}^{-1}(Y):\ V\!\times\!Y\in\RR'\big\}$ is rich in the (complex) space $X\!\times\! X^*$.
\end{proposition}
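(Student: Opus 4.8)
The plan is to verify, one condition at a time, that each of the two families satisfies the definition of a rich family. Throughout I will lean on three elementary facts. First, multiplication by $i$ is a surjective real-linear norm isometry of $X_{\R}$ and, separately, of $X^*$. Second, by Proposition~\ref{p:complexBasic} the map $\Re_X\colon X^*\to X_{\R}{}^*$ is a surjective real-linear norm isometry; hence $\Re_X$ and $\Re_X{}^{-1}$ are homeomorphisms for the norm topologies, so they commute with norm closures and preserve separability. Third, a real-linear subspace of a complex Banach space that is invariant under multiplication by $i$ is complex-linear (since $\C=\R\oplus i\R$). I also use the already-recorded fact that $\SS_\oo$ is a rich family, which reduces the verification of cofinality to enlarging ``rectangles''.

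\textbf{Step 1.} \emph{$\RR$ is rich in $X_{\R}\!\times\!X_{\R}{}^*$.} For $\sigma$-closedness, let $(V_n\!\times\!Y_n)_n$ be increasing in $\RR$; its closed union is $\big(\overline{\bigcup_n V_n}\big)\!\times\!\big(\overline{\bigcup_n Y_n}\big)$, and since multiplication by $i$ fixes each $V_n$ setwise it fixes $\overline{\bigcup_n V_n}$ setwise, while $\Re_X{}^{-1}\big(\overline{\bigcup_n Y_n}\big)=\overline{\bigcup_n\Re_X{}^{-1}(Y_n)}$ is $i$-invariant for the same reason, so the closed union belongs to $\RR$. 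For cofinality, given $W\in\SS(X_{\R}\!\times\!X_{\R}{}^*)$, richness of $\SS_\oo$ provides a rectangle $V_0\!\times\!Y_0\in\SS_\oo(X_{\R}\!\times\!X_{\R}{}^*)$ with $V_0\!\times\!Y_0\supset W$. Setting $Z_0:=\Re_X{}^{-1}(Y_0)$, I put
$$
V:=\closedSpan\big(V_0\cup iV_0\big),\qquad Y:=\Re_X\big(\closedSpan(Z_0\cup iZ_0)\big),
$$
where the closed spans are computed in $X$ and in $X^*$ respectively; here the $\R$-linear span of $V_0\cup iV_0$ is already $i$-invariant and so coincides with its $\C$-linear span, and likewise for $Z_0\cup iZ_0$. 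Then $V\in\SS(X_{\R})$, $Y\in\SS(X_{\R}{}^*)$, $V\supset V_0$, $Y\supset Y_0$, $iV=V$, and $i\Re_X{}^{-1}(Y)=\Re_X{}^{-1}(Y)$, so $V\!\times\!Y\in\RR$ and $V\!\times\!Y\supset W$.

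\textbf{Step 2.} \emph{For every rich $\RR'\subset\RR$, the family $\RR'':=\big\{V\!\times\!\Re_X{}^{-1}(Y)\setsep V\!\times\!Y\in\RR'\big\}$ is rich in the complex space $X\!\times\!X^*$.} I first note that each member of $\RR''$ lies in $\SS_\oo(X\!\times\!X^*)$: for $V\!\times\!Y\in\RR$, the set $V$ is a norm-closed separable real subspace of $X$ with $iV=V$, hence $V\in\SS(X)$; and $\Re_X{}^{-1}(Y)$ is a norm-closed, separable, $i$-invariant real subspace of $X^*$, hence $\Re_X{}^{-1}(Y)\in\SS(X^*)$. For $\sigma$-closedness, an increasing sequence in $\RR''$ has the form $\big(V_n\!\times\!\Re_X{}^{-1}(Y_n)\big)_n$, and injectivity of $\Re_X{}^{-1}$ forces $Y_1\subset Y_2\subset\cdots$, so $(V_n\!\times\!Y_n)_n$ increases in $\RR'$; its closed union $\big(\overline{\bigcup_n V_n}\big)\!\times\!\big(\overline{\bigcup_n Y_n}\big)$ lies in $\RR'$, and since $\Re_X{}^{-1}$ commutes with norm closure, the closed union of the original sequence equals $\big(\overline{\bigcup_n V_n}\big)\!\times\!\Re_X{}^{-1}\big(\overline{\bigcup_n Y_n}\big)\in\RR''$. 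For cofinality, given $W\in\SS(X\!\times\!X^*)$, richness of $\SS_\oo$ in the complex product gives a rectangle $V_0\!\times\!W_0\supset W$; then $Y_0:=\Re_X(W_0)\in\SS(X_{\R}{}^*)$ satisfies $\Re_X{}^{-1}(Y_0)=W_0$, which is $i$-invariant, while $iV_0=V_0$, so $V_0\!\times\!Y_0\in\RR$; cofinality of $\RR'$ then yields $V\!\times\!Y\in\RR'$ with $V\supset V_0$ and $Y\supset Y_0$, whence $V\!\times\!\Re_X{}^{-1}(Y)\in\RR''$ and $V\!\times\!\Re_X{}^{-1}(Y)\supset V_0\!\times\!W_0\supset W$.

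I do not anticipate a genuine obstacle: the argument is essentially bookkeeping. The one point that needs care is keeping the two incarnations of the symbol $\SS$ apart --- over $\R$ for $X_{\R}$ and $X_{\R}{}^*$ versus over $\C$ for $X$ and $X^*$ (the latter carrying $i$-invariance by convention) --- and checking that the two $i$-invariance clauses defining $\RR$ are precisely what makes the assignment $(V,Y)\mapsto(V,\Re_X{}^{-1}(Y))$ send real ``rectangles'' to complex ``rectangles''. Once this dictionary is in place, both richness verifications collapse to the isometry, continuity, and surjectivity properties listed at the outset.
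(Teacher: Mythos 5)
Your proof is correct and follows essentially the same route as the paper: transfer everything through the real-linear isometry $\Re_X$, enlarge the two coordinates of a given rectangle to $i$-invariant ones for cofinality, and deduce cofinality of the transported family from that of $\RR'$. The only cosmetic difference is that you achieve the $i$-invariance in one step by taking the (complex) span of $V_0\cup iV_0$ and $Z_0\cup iZ_0$, whereas the paper builds the dual-side subspace by an iterative countable $\Q$-linear construction; both arguments are sound.
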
 

\begin{proof} Since the mapping $\Re_X\!^{-1}$ is an isometry, $\RR$ and also the other family is $\sigma$-closed.

In order to verify the cofinality of $\RR$, fix any $Z\in\SS(X_{\R}\!\times\! X_{\R}{}^*)$.
Find countable sets $C_0\subset X,\ D_0\subset X_{\R}{}^*$ such that $\overline{C_0}\!\times\!\overline{D_0}
\supset Z$. Put $V:=\closedSpan C_0$. Let $n\in\N$ and assume that we have already constructed a countable set
$D_{n-1}\subset X^*$. Pick then a countable $\Q$-linear set $D_n\supset D_{n-1}$ such that $i\Re_X\!^{-1}(D_{n-1})\subset \Re_X\!^{-1}(D_n)$; for instance, take $D_n:=\qespan\big(\Re_X\big(i\Re_X\!^{-1}(D_{n-1})\big)\cup D_{n-1}\big)$. 
Doing so for every $n\in\N$, put finally $Y := \overline{\bigcup_{n=0}^\infty D_n}$.
Clearly, $V$ and $Y$ are separable subspaces and $Z\subset V\times Y$.
Moreover, it follows from the construction that $iV\subset V$ and $i\Re_X\!^{-1}(Y)\subset \Re_X\!^{-1}(Y)$.
Therefore, $V\!\times\!Y\in\RR$ and the cofinality of $\RR$ is proved.

Now, let a rich subfamily $\RR'\subset\RR$ be given. It remains to check the cofinality of the second family. So, consider any $Z\in\SS(X\!\times\! X^*)$.
Find $Z_1\in\SS(X)$ and $Z_2\in \SS(X^*)$ so that $Z_1\!\times \! Z_2\supset Z$. 
Then $Z_1\!\times\!\Re_X(Z_2)\in\SS_\oo(X_{\R}\!\times\!X_{\R}{}^*)$. From the cofinality of $\RR'$ we find
$V\!\times\!Y\in \RR'$ such that $V\!\times\! Y\supset Z_1\!\times\!\Re_X(Z_2)$. 
Then $V\!\times\! \Re_X\!^{-1}(Y)\supset Z_1\!\times\!Z_2\supset Z$.
\end{proof}

\begin{theorem}\label{t:asplundRichFamily}Let $(X,\nn)$ be a (real or complex) Banach space. Then 
the following assertions are equivalent.
	\begin{enumerate}[\upshape (i)]
		\item $X$ is an Asplund space.
		\item Every separable subspace of $X$ has separable dual.
	  \item There exists a rectangle-family $\mathcal A\subset \mathcal S_{\oo}(X\times X^*)$, rich in $X\times X^*$, such that
			$Y_1\subset Y_2$ whenever $V_1\times Y_1,\ V_2\times Y_2$ are in $\AA$ and $V_1\subset V_2$, and
			for every $V\times Y\in\AA$ the assignment
			$
			Y\ni x^*\longmapsto x^*{}|_{V}\in V^*
			$
			is a surjective isometry.
	\end{enumerate}
\end{theorem}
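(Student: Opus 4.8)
The plan is to prove the cycle (i)$\Rightarrow$(ii)$\Rightarrow$(iii)$\Rightarrow$(i), reducing the complex case to the real one via Propositions~\ref{p:complexBasic} and~\ref{l:richSubspace} at the start. Indeed, if $X$ is complex, then $X$ is Asplund iff $X_\R$ is (by definition), separable subspaces of $X$ correspond to separable subspaces of $X_\R$ with $iE=E$, and by Proposition~\ref{l:richSubspace} a rich rectangle-family in $X_\R\times X_\R{}^*$ of the required type produces one in $X\times X^*$ after applying $\Re_X{}^{-1}$ (which by Proposition~\ref{p:complexBasic} is an isometry identifying $(X_\R)^*$ with $X^*$ and preserving restrictions to subspaces); conversely a rectangle-family in $X\times X^*$ restricts to one in $X_\R\times X_\R{}^*$. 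So from now on assume $X$ is real.

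The implication (i)$\Rightarrow$(ii) is the classical characterization of Asplund spaces: a Banach space is Asplund iff every separable subspace has separable dual. I would simply invoke this (it is standard; see, e.g., the references in the paper), or sketch it: the localization theorem says $X$ is Asplund iff $X^*$ has the Radon–Nikod\'ym property, and RNP passes to subspaces, while for separable $Y$, $Y^*$ has RNP iff $Y^*$ is separable. The implication (iii)$\Rightarrow$(i) is easy once we have (ii)$\Leftrightarrow$(i): given a separable $V\in\SS(X)$, use cofinality of $\AA$ to find $V\times Y\in\AA$ with $V\subset$ the first coordinate; the surjective-isometry condition forces $Y$ to be isometric to $(V)^*$ hence $V^*$ is separable (being a subspace... more precisely, after enlarging so that $V$ itself is the first coordinate, or noting that $V$ sits inside the first coordinate $V'$ of some rectangle and $V'^*$ separable implies $V^*$ separable since restriction $V'^*\to V^*$ is onto), so (ii) holds, hence $X$ is Asplund.

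The main work, and the expected obstacle, is (ii)$\Rightarrow$(iii): constructing the rich family $\AA$. The idea is a back-and-forth / closing-off argument. Define $\AA$ to be the set of all rectangles $V\times Y$, $V\in\SS(X)$, $Y\in\SS(X^*)$, such that (a) $Y\subset X^*$ is a $1$-norming-on-$V$ total subspace with the restriction map $Y\to V^*$ a surjective isometry, and (b) a compatibility/monotonicity condition holds. To get cofinality: starting from any $V_0\times Y_0\in\SS_\oo(X\times X^*)$, alternately enlarge $V$ to contain a countable norming set of functionals coming from the current $Y$ (to make restriction isometric and surjective onto $V^*$, using that $V^*$ is separable by (ii) — pick a countable dense set in $B_{V^*}$, Hahn–Banach-extend each to $X^*$, throw them into $Y$) and enlarge $Y$ to norm the current $V$ (and to contain extensions realizing the norm of each vector in a countable dense subset of $V$); iterate $\omega$ times and close off. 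The delicate point is arranging simultaneously that (1) restriction $Y\to V^*$ is \emph{isometric} (this needs $Y$ to be norming for $V$ with constant $1$, achievable by adding norm-attaining extensions), (2) it is \emph{onto} $V^*$ (this needs every functional on $V$ to extend to something already in $Y$ — here one must be careful: after closing off, a functional in $V^*$ is a limit of restrictions of elements of $Y$, and one uses that $Y$ is closed together with the isometry to conclude the limit is itself attained by an element of $Y$), and (3) $\sigma$-closedness of the resulting family, plus the monotonicity clause "$Y_1\subset Y_2$ when $V_1\subset V_2$" — which may require building $\AA$ not as all such rectangles but as those arising from a fixed coherent construction, or intersecting with the rich family of rectangles where $Y$ is determined by $V$. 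Verifying that this $\sigma$-closed cofinal subfamily still satisfies the monotonicity and surjective-isometry conditions after taking increasing unions is the crux; it hinges on the fact that an increasing union of surjective isometries $Y_n\to V_n^*$, upon completion, yields a surjective isometry $\overline{\bigcup Y_n}\to\overline{\bigcup V_n}{}^*$, which in turn uses separability and the Asplund (=(ii)) hypothesis to control $(\overline{\bigcup V_n})^*$.
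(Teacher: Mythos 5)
Your architecture differs from the paper's in a way that matters for the assessment. The paper does not reprove the real case at all: for real $X$ it quotes the equivalence (i)$\Leftrightarrow$(ii) from \cite[Theorem 11.8]{ff} and the chain (i)$\Rightarrow$(iii)$\Rightarrow$(ii) from \cite[Theorem 2.3]{cf2}, and the actual content of the proof is the complex case --- the transport of a real rich family through $\Re_X{}^{-1}$ via Proposition~\ref{l:richSubspace}, a computation showing the transported restriction maps are again surjective isometries, and the $Z:=\C Y$ quotient trick for (ii)$\Rightarrow$(i). Your complex reduction is in the same spirit (though you gloss the isometry/surjectivity verification), but you undertake to prove the hard real implication (ii)$\Rightarrow$(iii) from scratch, and there your sketch has a genuine gap at exactly the point you yourself call the crux.

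The gap is the surjectivity of the restriction map for the limit rectangle, needed both for cofinality after the $\omega$-step closing-off and for $\sigma$-closedness. What your construction provides for $v^*\in V^*$ (with $V=\overline{\bigcup_n V_n}$) is, for each $n$, some $y_n\in Y$ with $\|y_n\|\le\|v^*\|+\varepsilon$ and $y_n|_{V_n}$ close to $v^*|_{V_n}$; this gives convergence of $y_n|_V$ to $v^*$ only pointwise on $\bigcup_n V_n$, i.e.\ in the weak$^*$ topology of $V^*$, not in norm. So ``$Y$ is closed together with the isometry'' does not close the argument: the isometry makes $Y|_V$ a norm-closed subspace of $V^*$, but a norm-closed subspace need not contain weak$^*$ limits of bounded sequences from it. The same problem defeats the ``fact'' you invoke for $\sigma$-closedness: if $R_n:Y_n\to V_n{}^*$ are surjective isometries with $V_n\times Y_n$ increasing and $y_n:=R_n{}^{-1}(v^*|_{V_n})$, then $(y_m-y_n)|_{V_n}=0$ but $\|y_m-y_n\|=\|(y_m-y_n)|_{V_m}\|$ is not controlled, so $(y_n)$ need not be norm-Cauchy and no element of $\overline{\bigcup_n Y_n}$ restricting to $v^*$ is exhibited. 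Supplying this missing step is precisely the nontrivial content of \cite[Theorem 2.3]{cf2} (which rests on \cite{cf1}); classically it requires an extra ingredient beyond ``(ii) plus separability of $V^*$'' --- e.g.\ Simons' inequality, dentability of dual balls, or a theorem to the effect that a norm-closed $1$-norming subspace of a separable dual must be the whole dual --- none of which appears in your sketch. (A smaller slip: isometry of $Y\ni y\mapsto y|_V$ requires $V$ to $1$-norm $Y$, not ``$Y$ norming $V$''; since norm-attainment on $V$ is destroyed by linear combinations, adding norm-preserving extensions to $Y$ alone does not yield it --- it is your alternating enlargement of $V$ that does.)
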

\begin{proof} First, assume that $X$ is a real Banach space. The equivalence (i)$\Longleftrightarrow$(ii) is well known and can be found, for instance, in \cite[Theorem 11.8]{ff}. The proof of the chain (i)$\Longrightarrow$(iii)$\Longrightarrow($ii) can be found
in  \cite[Theorem 2.3]{cf2}. 

Second, assume that $X$ is complex. The proof of the implication implication (iii)$\implies$(ii) is easy and is the same as in the real case, see \cite{cf2}. 

Assume that (i) holds. Then $X_{\R}$ is Asplund and, by the validity of the statement for real Banach spaces, there is a rich family $\mathcal A_1\subset \mathcal S_{\oo}(X_{\R}\times X_{\R}{}^*)$ with the properties as in (iii). Let $\RR$ be the rich family found in Proposition~\ref{l:richSubspace}; 
this is a rich family in $X_{\R}\!\times \!X_{\R}{}^*$. Put $\AA: = \{V\times \Re_X{}^{-1}(Y)\setsep V\times Y\in \AA_1\cap \RR\}\subset S_{\oo}(X\times X^*)$. 
By the second part of Proposition~\ref{l:richSubspace}, this is a rich family in the complex space $X\!\times\! X^*$.
By the properties of $\AA_1$, we have $Y_1\subset Y_2$ whenever $V_1\times Y_1,\ V_2\times Y_2$ are in $\AA$ and $V_1\subset V_2$. 
We shall prove that the assignment $\Re_X{}^{-1}(Y)\ni x^*\longmapsto  x^*{}|_{V}\in V^*$ is an isometry onto as well.
So, fix any $x^*\in\Re_X\!^{-1}(Y)$. We have by Proposition~\ref{p:complexBasic} and the real case
\begin{eqnarray*}
\|x^*\|&=&\big\|\Re_X(x^*)\big\|=\big\|\big(\Re_X(x^*)\big)|_{V_R}\big\|=\sup\big|\big\langle\Re_X(x^*),B_{V_R}\big\rangle\big|\\
&=&\sup\big|\Re\big\langle x^*|_V,B_{V}\big\rangle\big| \le \sup\big|\big\langle x^*,B_V\big\rangle\big|= \big\|x^*|_V\big\|\ \ (\le \|x^*\|).
\end{eqnarray*}
It follows that the latter assignment is an isometry. Now, take any $v^*\in V^*$. By the real case applied to $\Re_V(v^*)$ we find $y\in Y\ (\subset X_{\R}{}^*)$
such that $y|_{V_R}=\Re_V(v^*)$. Put $x^*:=\Re_X{}^{-1}(y)$. Then for every $v\in V$, from Proposition~\ref{p:complexBasic}, we have
$$
\langle x^*,v\rangle = \Re_X(x^*)(v)-i\Re_X(x^*)(iv)=\langle y,v\rangle - i\langle y,iv\rangle=
\Re_V(v^*)(v)-i\Re_V(v^*)(iv) = v^*(v).
$$
 Therefore $x^*{}|_V=v^*$ and the surjectivity is verified. 
 We proved (iii).

Finally, assume that (ii) holds. In order to show the validity of (i), pick any separable subspace 
$Y\subset X_{\R}$. Put $Z:=\C Y$; this is a complex separable subspace of $X$. By (ii), $Z^*$ is separable. By
Proposition~\ref{p:complexBasic}, $Z_R{}^*$ is separable. Hence $Y_R{}^*$, a quotient of $Z_R{}^*$, is also separable.
Having this proved, the real case of our theorem reveals that $X_{\R}$ is Asplund, i.e. $X$ is (complex) Asplund.
We thus got (i).
\end{proof}

For later purposes, we show that for an Asplund space $X$, there is a Markushevich basis in $X^*$. This is well-known in the case of real Banach spaces, see \cite[Theorem 8.2.2]{f}. Below we show that the same result holds also for complex Banach spaces, see Theorem~\ref{t:asplundPClass}. The proof we give goes through Theorem~\ref{t:asplundRichFamily}, which seems to be a new approach even in the case of real Banach spaces. First, similarly as in the real case, we observe that it suffices to prove that the class of duals to Asplund spaces is a $\PP$-class.

\begin{theorem}\label{t:pClass}Let $\mathcal{C}$ be a $\PP$-class of (real or complex) Banach spaces. Then every $X\in\mathcal{C}$ has a Markushevich basis.
\end{theorem}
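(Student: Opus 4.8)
The plan is to prove Theorem~\ref{t:pClass} by transfinite induction on $\dens X$, the standard route for deducing a Markushevich basis from the $\PP$-class property, adapted so that the argument is insensitive to whether $X$ is real or complex. First I would treat the base case: if $X$ is separable, then a Markushevich basis exists by a classical result (one may take a countable linearly dense set $\{x_i\}$ together with biorthogonal functionals obtained by a Gram--Schmidt-type procedure, or simply cite the separable case of \cite[Theorem 8.2.2]{f}; in the complex case one runs the same construction over $\Q+i\Q$). For the inductive step, assume $\dens X = \mu > \omega$ and that every space in $\mathcal{C}$ of smaller density character has a Markushevich basis. Since $\mathcal{C}$ is a $\PP$-class, fix a PRI $(P_\alpha : \omega \le \alpha \le \mu)$ on $X$ with $(P_{\alpha+1}-P_\alpha)(X) \in \mathcal{C}$ for every $\alpha \in [\omega,\mu)$.

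The key steps are then as follows. For each $\alpha\in[\omega,\mu)$, set $X_\alpha := (P_{\alpha+1}-P_\alpha)(X)$; this is a complemented subspace of $X$ with $\dens X_\alpha \le \card(\alpha+1) < \mu$ (using property (i) of a PRI and the fact that $\dens X_\alpha \le \dens P_{\alpha+1}X$), so by the inductive hypothesis $X_\alpha$ admits a Markushevich basis $\{(x_i^\alpha, f_i^\alpha)\}_{i\in I_\alpha}$ in $X_\alpha \times X_\alpha^*$. I would then push these forward to $X$: define $u_i^\alpha := x_i^\alpha \in X$ (viewing $X_\alpha \subset X$) and $u_i^{*\alpha} := f_i^\alpha \circ (P_{\alpha+1}-P_\alpha) \in X^*$. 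The claim is that $\{(u_i^\alpha, u_i^{*\alpha}) : \alpha\in[\omega,\mu),\ i\in I_\alpha\}$ is a Markushevich basis in $X$. Biorthogonality is immediate when the indices share the same $\alpha$, and for $\alpha \ne \beta$ one uses $(P_{\alpha+1}-P_\alpha)(P_{\beta+1}-P_\beta) = 0$, which follows from property (ii) of a PRI (the projections commute and compose to the smaller one). Linear density of $\{u_i^\alpha\}$ in $X$ follows by combining property (iii) of the PRI — namely $\overline{\bigcup_{\beta<\alpha}P_{\beta+1}X} = P_\alpha X$, hence $\overline{\bigcup_{\alpha<\mu} X_\alpha} = X$ via a routine telescoping/continuity argument using $P_\omega = 0$ and $P_\mu = \mathrm{id}$ — with the linear density of each $\{x_i^\alpha\}_{i\in I_\alpha}$ in $X_\alpha$. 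Finally, for the weak$^*$ density of $\sp\{u_i^{*\alpha}\}$ in $X^*$: given $x^*\in X^*$ with $x^*$ annihilating all $u_i^{*\alpha}$, I would show $x^* = 0$ by checking $x^*(P_{\alpha+1}x - P_\alpha x) = 0$ for all $x$ and all $\alpha$ (since $\{f_i^\alpha\}$ is weak$^*$ dense in $X_\alpha^*$, so total over $X_\alpha$), and then concluding $x^* = 0$ from totality of the increments $\{P_{\alpha+1} - P_\alpha\}$, which again rests on PRI properties (ii) and (iii). This gives a set $D \subset X^*$ whose linear span is weak$^*$ dense, namely $D = \sp\{u_i^{*\alpha}\}$, as required.

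The main obstacle, and the place where the complex case needs a word of care, is ensuring that all the subspaces and functionals involved are genuinely complex-linear objects: one must check that $X_\alpha = (P_{\alpha+1}-P_\alpha)(X)$ is a complex subspace (true because a projection on a complex space is complex-linear, so the increment is too), that the inductively supplied Markushevich basis of $X_\alpha$ is a complex Markushevich basis (so its span should be taken over $\Q+i\Q$, and weak$^*$ density is with respect to the complex dual), and that the pull-back functionals $f_i^\alpha \circ (P_{\alpha+1}-P_\alpha)$ are complex-linear, which they are as a composition of complex-linear maps. I do not expect any essential difficulty beyond this bookkeeping; once it is set up, the induction is the standard ``glue the pieces of a PRI together'' argument, and the verification of the three defining properties of a Markushevich basis reduces, at each stage, to the three defining properties of a PRI together with the inductive hypothesis.
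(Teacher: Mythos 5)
Your overall route---transfinite induction on $\dens X$, the separable case as the base, and gluing Markushevich bases of the increment spaces $(P_{\alpha+1}-P_\alpha)(X)$ of a PRI supplied by the $\PP$-class property---is exactly the argument the paper invokes: it cites the real case from \cite[Theorem 5.1]{hajek} and the gluing step from \cite[Proposition 6.2.4]{f}, remarking that the identical proof works over $\C$. Your biorthogonality check via $(P_{\alpha+1}-P_\alpha)\circ(P_{\beta+1}-P_\beta)=0$, the linear-density argument, and the complex-linearity bookkeeping are all fine.

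However, your verification of the weak$^*$ density of ${\rm sp}\,\{u_i^{*\alpha}\}$---one of the three defining conditions of a Markushevich basis---is garbled as written and would not stand as a proof. Weak$^*$ density of a subspace $D\subset X^*$ is equivalent, by the bipolar theorem, to $D_\perp=\{0\}$; the test objects are vectors $x\in X$, not elements $x^*\in X^*$. The phrase ``$x^*\in X^*$ annihilating all $u_i^{*\alpha}$'' does not parse (both live in $X^*$), and if what you meant is an $x^*$ vanishing on all the vectors $u_i^{\alpha}$, then the chain you sketch proves the linear density of $\{u_i^{\alpha}\}$ in $X$ (which you already have) rather than the weak$^*$ density of the functionals; moreover the justification you attach (totality of $\{f_i^\alpha\}$ over $X_\alpha$) does not support the displayed conclusion under either reading. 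The correct check is: take $x\in X$ with $u_i^{*\alpha}(x)=0$ for all $\alpha$ and $i$; then $f_i^\alpha\big((P_{\alpha+1}-P_\alpha)x\big)=0$ for all $i$, so totality of $\{f_i^\alpha\}$ over $X_\alpha$ yields $(P_{\alpha+1}-P_\alpha)x=0$ for every $\alpha$; finally a transfinite induction gives $P_\alpha x=0$ for all $\alpha$, whence $x=P_{\dens X}\,x=0$. Note that the limit stages of this induction need the norm continuity of $\alpha\mapsto P_\alpha x$, which is obtained by an approximation argument using property (iii) together with the uniform bound $\|P_\alpha\|=1$ from property (i) of a PRI, not only properties (ii) and (iii) as you assert; this continuity is precisely the content of the gluing lemma \cite[Proposition 6.2.4]{f} that the paper quotes. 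With this step repaired, your proof coincides with the paper's.
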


Theorem~\ref{t:pClass} is known and formulated for the case of real Banach spaces, see \cite[Theorem 5.1]{hajek}; 
the identical proof works for complex Banach spaces. Let us note that it depends on the fact that separable complex Banach spaces admit a Markushevich basis (the proof of this is the same as in the real case, see \cite[Theorem 1.22]{hajek}) and on the fact that it is possible to glue a Markushevich basis from 
Markushevich bases on certain subspaces (the proof is identical with the real case, see that of \cite[Proposition 6.2.4]{f}).

We need one more auxiliary statement.

\begin{lemma}\label{l:upDirected}Let $(Z,\|\cdot\|)$ be Banach space, with an r-projectional skeleton $\big(P_s:\ s\in\Gamma\big)$, 
and let $A\subset\Gamma$ be an up-directed subset of $\Gamma$. Then the formula
\[
	P_A(z):=\lim_{s\in A}P_s(z),\ \ z\in Z,
\]
well defines a projection of $Z$ onto $\overline{\bigcup_{s\in A}P_sZ}$ and $\|P_A\|\le r$.
\end{lemma}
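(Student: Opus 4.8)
The plan is to show first that for each fixed $z\in Z$ the net $(P_s z)_{s\in A}$ is norm-convergent, and then to identify the limit as a projection. The key observation is that the family $\{P_s Z:\ s\in A\}$ is an up-directed (by inclusion) system of separable subspaces, because of property (iii) of a projectional skeleton: if $s\le t$ then $P_s Z = P_t P_s Z\subset P_t Z$, and since $A$ is up-directed in $\Gamma$, any two indices in $A$ have a common upper bound in $A$. Hence $W:=\overline{\bigcup_{s\in A}P_s Z}$ is a well-defined closed subspace; it is separable only if $A$ is countable, but that does not matter here. To get convergence, I would fix $z\in Z$ and $\varepsilon>0$. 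Since $Z=\bigcup_{s\in\Gamma}P_s Z$ (property (ii)), pick $u\in P_{s_0}Z$ with $\|z-u\|<\varepsilon$ for some $s_0\in\Gamma$; this $s_0$ need not lie in $A$. The natural move is instead to first treat $z$ lying in some $P_{s_1}Z$ with $s_1\in A$: then for every $s\in A$ with $s\ge s_1$ we have $P_s z = P_s P_{s_1} z = P_{s_1} z$ by (iii), so the net is eventually constant, and $P_A z = P_{s_1} z$. For general $z$, approximate: given $\varepsilon$, choose any $s'\in A$, and note $P_{s'} z\in P_{s'}Z\subset W$ with $\|z - P_{s'} z\|\le (1+r)\dist(z,P_{s'}Z)$ — but $\dist(z,P_{s'}Z)$ need not be small.

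So the honest route is: for $z\in Z$ and $\varepsilon>0$, using (ii) find $s_0\in\Gamma$ with $\dist(z,P_{s_0}Z)<\varepsilon$, hence $\|z-P_{s_0}z\|\le(1+r)\varepsilon$... but again $s_0\notin A$ is a problem. The correct and standard fix, which I expect to be the \emph{main obstacle} to state cleanly, is to observe that although $s_0$ may not be in $A$, one does not need it to be: it suffices that $\overline{\bigcup_{s\in A}P_sZ}$ is norming-dense for the net's purposes in the following weaker sense. Actually the clean argument is: the net $(P_s z)_{s\in A}$ is bounded by $r\|z\|$; for $s,t\in A$ with a common upper bound $u\in A$, $P_s z - P_t z = P_s P_u z - P_t P_u z$, so it suffices to show $(P_s w)_{s\in A}$ is Cauchy for $w$ ranging over a dense subset of $W$; and for $w\in\bigcup_{s\in A}P_s Z$ the net is eventually constant as shown above. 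An $\varepsilon/3$-argument (approximate $P_u z$ — which lies in $W$ — by some $w\in\bigcup_{s\in A}P_sZ$, use boundedness $\le r$ of the $P_s$, and eventual constancy on $w$) then gives that $(P_s z)_{s\in A}$ is Cauchy, hence convergent; here one passes through a representative $P_u z$ with $u\in A$ large, noting $P_u z\to$ has the same limiting behaviour. Define $P_A z$ to be this limit.

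Once convergence is established, linearity and the bound $\|P_A\|\le r$ are immediate from taking limits in $\|P_s z\|\le r\|z\|$ and in the linearity of each $P_s$. For idempotency, I would use that $P_A z = \lim_{s\in A} P_s z$ lies in $W=\overline{\bigcup_{s\in A}P_sZ}$; for $w\in P_{s_1}Z$ with $s_1\in A$ we have $P_A w = w$ by the eventual-constancy argument above, and since $P_A$ is bounded and linear it equals the identity on the dense subspace $\bigcup_{s\in A}P_sZ$ of $W$, hence on all of $W$; as $P_A Z\subset W$, we get $P_A\circ P_A = P_A$. Finally $P_A Z = W$: the inclusion $\subset$ is clear since $P_A z\in W$, and $\supset$ follows because $P_A$ restricts to the identity on $W$. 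This completes the proof; the only delicate point is the convergence of the net, which is handled by the density/eventual-constancy $\varepsilon/3$ estimate, using crucially both property (ii) (so that $W$ can absorb the approximation via some $P_u z$, $u\in A$) and property (iii) (so that $P_s$ is eventually constant on elements of $\bigcup_{s\in A}P_sZ$).
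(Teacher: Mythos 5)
The decisive step---norm convergence of the net $(P_s z)_{s\in A}$---is where your argument has a genuine gap. Your reduction runs: for $s,t\in A$ with a common upper bound $u\in A$ one has $P_sz-P_tz=(P_s-P_t)(P_uz)$, and then you approximate $P_uz$ by some $w\in\bigcup_{s'\in A}P_{s'}Z$ and invoke eventual constancy of $(P_sw)_{s\in A}$. But eventual constancy at $w\in P_{s_1}Z$ only gives $P_sw=w$ for $s\ge s_1$, whereas the identity $P_sz=P_sP_uz$ is available only for $s\le u$; the two requirements point in opposite directions, and the would-be approximant $P_uz$ changes with $u$. To run the $\ee/3$ estimate you would need one fixed $w$ with $\|P_uz-w\|$ small for all large $u\in A$, which is exactly the Cauchy property you are trying to prove. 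Property (ii) does not rescue this: the index whose range (nearly) contains $z$ need not lie in $A$, and $z$ need not be anywhere near $W=\overline{\bigcup_{s\in A}P_sZ}$ (the parenthetical ``so that $W$ can absorb the approximation via some $P_uz$'' is vacuous, since $P_uz\in W$ automatically). Moreover, no argument using only the uniform bound and property (iii) inside $A$---which is all your final step actually invokes---can work: in $Z=c$ let $A$ be the finite subsets of $\N$ and $P_F$ the coordinate projections $x\mapsto \mathbf{1}_F\cdot x$; these are norm-one, satisfy (iii), have separable ranges, yet for $z=(1,1,1,\dots)$ the net $(P_Fz)_F$ is not Cauchy, since $\|P_Fz-P_Gz\|_\infty=1$ whenever $F\ne G$. (Of course this family is not part of a skeleton, which is consistent with the lemma being true; it shows that any proof must use the $\sigma$-structure, and yours does not.)

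What is missing is precisely the use of $\sigma$-completeness of $\Gamma$ together with axiom (iv); this is how the argument the paper relies on (it simply cites Lemma 11 of \cite{ku} and notes the complex case is identical) proceeds. If $(P_sz)_{s\in A}$ is not Cauchy, there is $\ee>0$ such that one can recursively choose $s_1<s_2<\cdots$ in $A$ with $\|P_{s_{n+1}}z-P_{s_n}z\|\ge\ee/2$; put $t:=\sup_n s_n$, which exists in $\Gamma$ by $\sigma$-completeness although it need not lie in $A$. By (iv), $P_tZ=\overline{\bigcup_n P_{s_n}Z}$, and since $P_{s_n}z=P_{s_n}P_tz$ with $P_tz$ in that closure, your approximation-plus-eventual-constancy estimate is now legitimately applicable and yields $P_{s_n}z\to P_tz$, contradicting the choice of the $s_n$. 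With convergence established, the rest of your write-up (linearity, $\|P_A\|\le r$, $P_A$ equals the identity on a dense subspace of $W$ and hence on $W$, $P_AZ=W$, idempotency) is correct and standard.
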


The real case of it is just \cite[Lemma 11]{ku}. In the complex case the identical argument works.

\begin{theorem}\label{t:asplundPClass}The class of duals to (real or complex) Asplund spaces is a $\PP$-class. Consequently, if $X$ is Asplund, then $X^*$ has a Markushevich basis.
\end{theorem}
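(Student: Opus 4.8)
The plan is to prove that the class of duals to Asplund spaces is a $\PP$-class, from which the ``consequently'' part follows immediately by Theorem~\ref{t:pClass}. So fix an Asplund space $X$ (real or complex); we must produce a PRI $(P_\alpha: \omega\le\alpha\le\dens X^*)$ on $X^*$ such that each $(P_{\alpha+1}-P_\alpha)(X^*)$ is again a dual of an Asplund space. The natural source of the PRI is the rich family $\AA\subset\SS_\oo(X\times X^*)$ provided by Theorem~\ref{t:asplundRichFamily}(iii): its defining property says that for every $V\times Y\in\AA$ the restriction map $Y\ni x^*\mapsto x^*|_V\in V^*$ is a surjective isometry, and the monotonicity condition ($V_1\subset V_2\Rightarrow Y_1\subset Y_2$) will let me organize the members of $\AA$ into an increasing transfinite chain.

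First I would recall (this is the standard mechanism, e.g. via \cite[Lemma 3.31]{hajek} or the PG-to-PRI machinery) that a rich family in $X\times X^*$ of rectangles, together with a long increasing well-ordered cofinal sequence through it, yields for each countable-type ``piece'' a projection; more precisely, using cofinality and $\sigma$-closedness of $\AA$ one builds, by transfinite induction on $\alpha\le\mu:=\dens X^*$, an increasing continuous chain $(V_\alpha\times Y_\alpha)_{\alpha\le\mu}$ in $\AA$ with $\dens V_\alpha\le\card\alpha$, $\dens Y_\alpha\le\card\alpha$, $\bigcup_\alpha Y_\alpha$ dense in $X^*$ (and hence, since $\AA$ is norming, $\bigcup_\alpha V_\alpha$ dense in $X$). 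For each $\alpha$ I then define $P_\alpha:X^*\to X^*$ to be the composition of the restriction $X^*\to V_\alpha{}^*$ with the inverse isometry $V_\alpha{}^*\to Y_\alpha\subset X^*$ given by Theorem~\ref{t:asplundRichFamily}(iii). Equivalently, $P_\alpha x^*$ is the unique element of $Y_\alpha$ agreeing with $x^*$ on $V_\alpha$. One checks $\|P_\alpha\|=1$ (it is a composition of isometries onto), $P_\alpha X^*=Y_\alpha$ has density $\le\card\alpha$, $P_\alpha P_\beta=P_\beta P_\alpha=P_\alpha$ for $\beta\ge\alpha$ (because $Y_\alpha\subset Y_\beta$ and elements of $Y_\alpha$ are already fixed by restriction-to-$V_\alpha$ followed by the inverse), and the continuity condition (iii) in the definition of a PRI from the continuity of the chain $\alpha\mapsto V_\alpha\times Y_\alpha$. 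Setting $P_\omega=0$ requires choosing $V_\omega\times Y_\omega$ to be (the closure of) $\{0\}\times\{0\}$, which is the minimal element we may insist on at the base of the induction.

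The heart of the matter — and the step I expect to be the main obstacle — is verifying that each difference space $(P_{\alpha+1}-P_\alpha)(X^*)$ is itself a dual of an Asplund space, so that the process can be iterated and the class is genuinely a $\PP$-class. Here I would argue that $(P_{\alpha+1}-P_\alpha)(X^*)$ is isometric to $(V_{\alpha+1}/V_\alpha)^*$, or at any rate to the dual of a separable (hence trivially Asplund, and Asplund is inherited by subspaces/quotients, and all separable reflexive-or-not spaces with separable dual qualify) Banach space built from $V_{\alpha+1}$ modulo $V_\alpha$. Concretely: $P_{\alpha+1}X^*=Y_{\alpha+1}\cong V_{\alpha+1}{}^*$ and the subspace $P_\alpha Y_{\alpha+1}=Y_\alpha\cong V_\alpha{}^*$ corresponds under the isometry to the kernel of the restriction $V_{\alpha+1}{}^*\to V_\alpha{}^*$, which is $(V_\alpha)^\bot$ inside $V_{\alpha+1}{}^*$, isometric to $(V_{\alpha+1}/V_\alpha)^*$. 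Since $V_{\alpha+1}/V_\alpha$ is separable, its dual is separable, hence in particular it is the dual of an Asplund space; thus $(P_{\alpha+1}-P_\alpha)(X^*)$ lies in our class. A small technical point to handle carefully is that $P_{\alpha+1}-P_\alpha$ really is a projection with range exactly this kernel, which follows from the commutation relations $P_\alpha P_{\alpha+1}=P_{\alpha+1}P_\alpha=P_\alpha$ established above. In the complex case everything goes through verbatim once one uses the complex version of Theorem~\ref{t:asplundRichFamily} together with Proposition~\ref{l:richSubspace} to ensure the rectangles are $i$-invariant, so the quotient $V_{\alpha+1}/V_\alpha$ is a genuine complex space; I would simply remark that the real argument applies mutatis mutandis. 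Finally, for the ``consequently'' clause, I invoke Theorem~\ref{t:pClass}: a $\PP$-class consists of spaces with a Markushevich basis, so $X^*$ has one.
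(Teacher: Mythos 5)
Your overall strategy---run a transfinite chain through the rich family of Theorem~\ref{t:asplundRichFamily}(iii), define projections on $X^*$ by restricting to $V$ and lifting back through the inverse isometry, and identify the difference spaces with duals of quotients---is the same as the paper's, but there is a genuine gap at the decisive step: the chain $(V_\alpha\times Y_\alpha)_{\alpha\le\mu}$ you posit cannot lie in $\AA$. Every member of $\AA\subset\SS_\oo(X\times X^*)$ is a rectangle of \emph{separable} subspaces, whereas a PRI on $X^*$ forces $P_{\dens X^*}=\mathrm{id}_{X^*}$ and, at uncountable stages (in particular at limit ordinals of uncountable cofinality, where continuity gives $Y_\alpha=\overline{\bigcup_{\beta<\alpha}Y_{\beta+1}}$), ranges of density up to $\card\alpha$; the $\sigma$-closedness of $\AA$ only covers countable increasing unions. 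So for $\alpha\ge\omega_1$ the pair $V_\alpha\times Y_\alpha$ is not a member of $\AA$, and the defining property of $\AA$---that $Y\ni x^*\mapsto x^*|_V$ is a surjective isometry---is exactly what you invoke for it but do not have: it is what defines your $P_\alpha$ (``the unique element of $Y_\alpha$ agreeing with $x^*$ on $V_\alpha$''), gives $\|P_\alpha\|=1$, and drives your computation of $(P_{\alpha+1}-P_\alpha)(X^*)$. For the same reason your remark that $V_{\alpha+1}/V_\alpha$ is separable fails beyond the countable stages.

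The paper closes this gap differently: it first checks that $(P_V:\ V\in\AA_X)$ is a $1$-projectional skeleton on $X^*$, and then, for an \emph{arbitrary up-directed subset} $A\subset\AA_X$, defines $P_Ax^*:=\lim_{V\in A}P_Vx^*$ via Lemma~\ref{l:upDirected}; the PRI comes from a continuous chain of up-directed sets $A_\alpha$ with $\card A_\alpha\le\card\alpha$, not from a chain of single rectangles. The restriction-isometry property then has to be re-proved for these limit projections---this is the content of \eqref{eq:vc} in the paper's Claim, namely $(P_Ax^*)|_{V_A}=x^*|_{V_A}$ and $\|P_Ax^*\|=\|x^*|_{V_A}\|$ with $V_A=\overline{\bigcup_{V\in A}V}$---and only with this in hand do the commutation relations and the isometry $(P_B-P_A)(X^*)\cong(V_B/V_A)^*$ for up-directed $A\subset B$ follow. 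Since these quotients are in general nonseparable, membership in the class is then obtained from the fact that quotients of Asplund spaces are Asplund, not from separability of the dual. Your single-rectangle computation of the difference space is sound as far as it goes, but it must be carried out at the level of the limit projections $P_A$, and that requires the extra step \eqref{eq:vc} which your argument does not supply.
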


\begin{proof}Let $(X,\nn)$ be any Asplund space. If $X$ is separable, then, $X^*$ being separable, has 
a Markushevich basis by a complex analogue of \cite[Theorem 1.22]{hajek}. Assume further that $X$ is not separable.
Let $\AA\subset \mathcal S_{\oo}(X\times X^*)$ be the rich family from Theorem~\ref{t:asplundRichFamily} (iii). It is easy to check that the family
\[
\AA_X\!:= \big\{V\in\SS(X)\setsep \exists Y\in\SS(X^*)\quad V\!\times\!Y\in\AA\big\}
\]
is rich in $\SS(X)$ and, for every $V\in \AA_X$, there is a unique $Y_V\in \SS(X^*)$ with $V\times Y_V\in \AA$. For $V\in\AA_X$, denote by $R_V$ the restriction mapping $R_V:Y_V\to V^*$ defined by $R_V(x^*):= x^*{}|_{V}$, $x^*\in Y_V$. By the properties of the family $\AA$, we have that $R_V$ is a (complex) linear isometry onto. Define $P_V:X^*\to X^*$ by $P_V(x^*):=R_V{}^{-1}(x^*{}|_{V})$, $x^*\in X^*$. It is easy to see that $P_V$ is a 
(complex) projection with $\|P_V\| = 1$, $P_V(X^*) = Y_V$ and $P_V{}^{-1}(0) = V^\bot$. 
Hence, for 
$V,V'\in{\RR}_X$, with $V\subset V'$, we have $P_{V'}\circ P_{V}=P_V=P_V\circ P_{V'}$ and $(P_V\setsep V\in\AA_X)$ is a 1-projectional skeleton on $(X^*,\|\cdot\|)$.

For an up-directed set $A \subset\AA_X$ we put $V_A:=\overline{\bigcup\{V\setsep V\in A\}}$ and $Y_A:=\overline{\bigcup\{Y_V\setsep V\in A\}}$. By Lemma \ref{l:upDirected}, there is a projection $P_A:X^*\to X^*$ with $P_A X^* = Y_A$ and $P_A(x^*) = \lim_{V\in A}P_V(x^*)$ for every $x^*\in X^*$.

\begin{claim}If $A\subset B$ are two up-directed subsets of $\AA_X$, 
then we have $P_B\circ P_A = P_A=P_A\circ P_B$ and $(P_B - P_A)(X^*)$ is isometric with $(V_B/V_A)^*$.
\end{claim}
\begin{proof}[Proof of the Claim]
Fix $A,B$ as above. 
Since $Y_A\subset Y_B$, we have $P_A = P_B\circ P_A$. For each $V\in A$ we have that $B':=\{V'\in B\setsep V'\supset V\}$ is cofinal in $B$ and up-directed. So, for each $x^*\in X^*$, we have
\[
	P_V\circ P_B(x^*) = \lim_{V'\in B'} P_V\circ P_{V'}(x^*) = \lim_{V'\in B'} P_V(x^*) = P_V(x^*);
\]
hence, for every $V\in A$ we have $P_V = P_V\circ P_B$ and, passing to a limit, we get $P_A = P_A\circ P_B$.

Observe that, for every up-directed set $C\subset \AA_X$ and every $x^*\in X^*$, we have
\begin{equation}\label{eq:vc}
(P_Cx^*){}|_{V_C} = x^*{}|_{V_C}\qquad \text{and} \qquad \|P_Cx^*\| = \|x^*{}|_{V_C}\|.
\end{equation}
Indeed, pick $V\in C$ and $v\in V$. Then the set $C':=\{V'\in C\setsep V'\supset V\}$ is cofinal in $C$ and so we have 
\[
	P_Cx^*(v) = \lim_{V'\in C'}(P_{V'}x^*)(v) = \lim_{V'\in C'}(R_{V'}{}^{-1}(x^*{}|_{V'}))(v) = \lim_{V'\in C'}x^*(v) = x^*(v).
\]
Since $v\in V$ was arbitrary, we get $P_Cx^*{}|_{V} = x^*{}|_{V}$ for every $V\in C$; hence, $(P_Cx^*){}|_{V_C} = x^*{}|_{V_C}$.
As to the second equality in (\ref{eq:vc}), for every $V\in C$ and every $x^*\in Y_V$, we have $\|x^*{}|_V\| = \|x^*\|$; hence, the norm of every $x^*\in \bigcup\{Y_V\setsep V\in C\}$ is realized on the set $V_C$. Therefore, for every $x^*\in P_CX^* = Y_C$ we have $\|x^*\| = \|x^*{}|_{V_C}\|$ and so, for every $x^*\in X^*$, we get $\|P_Cx^*\| = \|(P_Cx^*){}|_{V_C}\| = \|x^*{}|_{V_C}\|$ by (\ref{eq:vc}).

As to the isometric statement, we proceed similarly as in the proof of \cite[Proposition 6.1.9 (iv)]{f}. We define a mapping $\varphi:(P_B - P_A)(X^*)\longrightarrow (V_B/V_A)^*$ by
\[
	\varphi(x^*)([v]):=x^*(v),\quad x^*\in (P_B - P_A)(X^*), \quad [v]\in V_B/V_A.
\]
It is well defined since, by \eqref{eq:vc}, for $x^*\in X^*$, we have $(P_Ax^*){}|_{V_A} = x^*{}|_{V_A} = (P_Bx^*){}|_{V_A}$; hence, for $x^*\in X^*$ and $v\in V_A$, we get $\big((P_B - P_A)(x^*)\big)(v) = 0$. Moreover for every $x^*\in (P_B - P_A)(X^*)\subset P_BX^*$ we get
\[\begin{split}
	\|\varphi(x^*)\| & = \sup\big\{\varphi(x^*)([v])\setsep [v]\in V_B/V_A, \ \|[v]\| < 1\big\}\\ & = \sup\big\{x^*(v)\setsep v\in V_B, \ \|v\| < 1\big\} = \|x^*{}|_{V_B}\| \stackrel{\eqref{eq:vc}}{=} \|P_Bx^*\| = \|x^*\|.
\end{split}\]
It remains to prove that $\varphi$ is onto. Let $v^*\in (V_B/V_A)^*$ be given and define $f\in (V_B)^*$ by $f(v) = v^*([v])$, $v\in V_B$. Pick $\tilde{f}\in X^*$, a (real or complex) Hahn-Banach 
extension of $f$, see \cite[Theorem 2.2]{ff}. Then, by \eqref{eq:vc}, we have $\|P_A\tilde{f}\| = \|\tilde{f}{}|_{V_A}\| = \|f{}|_{V_A}\| = 0$; thus, $P_A\tilde{f} = 0$. Hence for all $[v]\in V_B/V_A$ we get
\[
	\varphi\big((P_B - P_A)(\tilde{f})\big)([v]) = (P_B - P_A)(\tilde{f})(v) = (P_B\tilde{f})(v) \stackrel{\eqref{eq:vc}}{=} \tilde{f}(v) = f(v) = v^*([v]);
\]
that is, $\varphi\big((P_B - P_A)(\tilde{f})\big) = v^*$, which means that $\varphi$ is surjective.
\end{proof}
Now, the rest of the proof is easy. Fix a continuous chain of up-directed sets $\{A_\alpha\setsep \omega\leq\alpha\leq\dens X\}$ in $\AA_X$ such that $\bigcup \{V\setsep V\in \bigcup_{\omega\leq\alpha\leq\dens X}A_\alpha\}$ is dense in $X^*$; the continuity of our chain means that $A_\beta = \bigcup_{\alpha < \beta}A_\alpha$ whenever $\beta$ is a limit ordinal. Then, using the claim above, it is easy to see that $(P_{A_\alpha}\setsep \omega\leq\alpha\leq\dens X)$ is a PRI on $(X^*,\nn)$ such that $(P_{A_{\alpha + 1}} - P_{A_\alpha})(X^*)$ is isometric with $(V_{A_{\alpha+1}}/V_{A_\alpha})^*$ which is the dual of the Asplund space
$V_{A_{\alpha+1}}/V_{A_\alpha}$, see \cite[Theorem 1.1.2 (ii)]{f}.
\end{proof}

We recall that WCG, even Va\v s\'ak, even WLD real Banach spaces admit a projectional generator with domain $X^*$, see \cite[pages 125, 153]{f}.
Also, duals to Asplund spaces admit a PG, see \cite[page 150]{f}.
For an inquisitive reader we indicate how to construct a projectional generator in real WCG Banach spaces.
Assume that $K$ is a weakly compact and linearly dense set in a real Banach space $X$. According to
Krein-Shmulyan theorem we may and do assume that $K$ is convex. Define $\Phi: X^*\rightarrow
K$ by $\Phi(x^*)=k$, where $k\in K$ is such that $x^*(k)=\sup\, \{x^*(h):\ h\in K\}$, and
put $\Phi(x^*):=k$. Then the couple $\langle X^*,\Phi\rangle$ is a projectional generator on $X$.
This follows, after some effort, from Mackey-Arens theorem, see \cite[Proposition 3.43]{hajek}.

The next statement enables us to transfer a projectional generator from $X_\R$ to $X$. 

\begin{proposition}\label{p:pg}Let $X$ be a complex Banach space and $D\subset X^*$ a norming (complex) subspace. 
If $X_{\R}$ admits a projectional generator with domain $\Re_X(D)$, then $X$ admits a
projectional generator with domain $D$.	
\end{proposition}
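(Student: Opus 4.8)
The plan is to take the given projectional generator $\langle\Re_X(D),\Psi\rangle$ on $X_{\R}$ and manufacture from it a projectional generator $\langle D,\Phi\rangle$ on $X$, the only subtlety being that the ``countable subsets with linear closed span'' condition is now about $\C$-linear spans and $w^*$-closures in the complex dual. First I would record the easy facts: $D$ is $r$-norming as a complex subspace if and only if $\Re_X(D)$ is $r$-norming in $(X_{\R})^*$, since $|x^*(x)| \ge |\Re x^*(x)| = |\Re_X(x^*)(x)|$ gives one inequality and choosing a unimodular $\lambda$ with $\lambda x^*(x) = |x^*(x)|$ and replacing $x$ by $\lambda^{-1}x$ gives the other; and $\Re_X(D)$ is a closed real-linear subspace of $(X_{\R})^*$ because $\Re_X$ is a real-linear isometry onto (Proposition~\ref{p:complexBasic}) and $D$ is closed.

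Next I would define $\Phi$. Given $x^*\in D$, set $\Phi(x^*) := \Psi(\Re_X(x^*)) \cup i\,\Psi(\Re_X(ix^*)) \subset X$; this is a countable subset of $X$ (recall $ix^*\in D$ since $D$ is a complex subspace). Now fix $E\in[D]^{\le\omega}$ with $\overline{E}$ a closed complex-linear subspace; I must check $\Phi(E)^{\bot}\cap\overline{E}^{\,w^*}=\{0\}$, where $\Phi(E):=\bigcup_{x^*\in E}\Phi(x^*)$. Consider the real set $F := \Re_X(E)\cup\Re_X(iE) \subset (X_{\R})^*$; it is a countable subset of $\Re_X(D)$. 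Its closed real-linear span is $\Re_X$ of the closed real-linear span of $E\cup iE$, which equals $\Re_X(\overline{E})$ since $\overline{E}$ is complex-linear (so it is its own $\C$-span and contains $iE$); in particular $\overline{F}^{\,\|\cdot\|}$ is linear, so the defining property of $\Psi$ applies to (a countable dense subset generating) $F$: we get $\Psi(F)^{\bot}\cap\overline{F}^{\,w^*}=\{0\}$ in $(X_{\R})^*$, where $\Psi(F):=\bigcup_{f\in F}\Psi(f) \supset \Phi(E)$ by construction (every $\Psi(\Re_X(x^*))$ and $\Psi(\Re_X(ix^*))$ appears, and the extra factor $i$ only enlarges the annihilator condition — I would in fact check $\Phi(E)^{\bot}\subset (\Psi(F)\cup i\Psi(F))^{\bot}$, which is immediate).

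The remaining work is a translation through $\Re_X$. Suppose $x^*\in\Phi(E)^{\bot}\cap\overline{E}^{\,w^*}$ in $X^*$. Since $\Re_X$ is a $w^*$-to-$w^*$ homeomorphism, $\Re_X(x^*)\in\overline{\Re_X(E)}^{\,w^*}\subset\overline{F}^{\,w^*}$. For the annihilator side, $x^*$ kills $\Phi(E)$, and since $\Phi(E)\supset\Psi(\Re_X(y^*))$ for each $y^*\in E$, Proposition~\ref{p:complexBasic} (the formula $y^*(z)=\Re_X(y^*)(z)-i\Re_X(iy^*)(z)$, hence $\Re_X(w^*)(z)=\Re w^*(z)$) shows $\Re_X(x^*)$ annihilates $\Psi(\Re_X(y^*))$; also using $\Phi(E)\supset i\Psi(\Re_X(iy^*))$ and $x^*(iv)=0$ one gets $\Re_X(x^*)$ annihilates $\Psi(\Re_X(iy^*))$ too. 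Thus $\Re_X(x^*)\in\Psi(F)^{\bot}\cap\overline{F}^{\,w^*}=\{0\}$, so $x^*=0$. The main obstacle — and the reason the factor $i$ is put into the definition of $\Phi$ — is exactly this matching of the complex annihilator $\Phi(E)^{\bot}$ with a real annihilator: a complex functional vanishing on a set $S$ vanishes on $\Re_X$-image only after one also knows it vanishes on $iS$, so one must feed both $\Psi(\Re_X(x^*))$ and $i\Psi(\Re_X(ix^*))$ into $\Phi(x^*)$; verifying that $\overline F$ is genuinely real-linear (which needs $\overline E$ complex-linear, not merely linearly closed over $\R$) is the other point to get right.
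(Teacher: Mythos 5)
Your proof is correct and is essentially the paper's argument: transfer the projectional generator through the real-linear isometry $\Re_X$ by letting $\Phi(x^*)$ contain $\Psi(\Re_X(x^*))$ together with (a copy of) $\Psi(\Re_X(ix^*))$, use the complex-linearity of $\overline E$ to see that the relevant norm-closure on the real side is linear, and translate the annihilator and weak$^*$-closure conditions via Proposition~\ref{p:complexBasic}. The differences are cosmetic: the paper applies the real PG twice, to $\Re_X(E)$ and to $\Re_X(iE)$, and concludes $g=0$ from $\Re_X(g)=\Re_X(ig)=0$, whereas you apply it once to $F=\Re_X(E)\cup\Re_X(iE)$ and invoke injectivity of $\Re_X$; moreover your extra factor $i$ in the definition of $\Phi$ is harmless but not actually needed, since a complex functional vanishes on $v$ if and only if it vanishes on $iv$.
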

\begin{proof}Let $\langle \Re(D), \Phi_0\rangle$ be a PG in $X_{\R}$. Define $\Phi:D\to [X]^{\leq\omega}$ by $\Phi(d):=\Phi_0(\Re(d))\cup \Phi_0(\Re(id))$, $d\in D$. In order to verify that $\langle D,\Phi\rangle$ is a PG, fix any $E\in[D]^{\leq\omega}$ such that $\overline{E}$ is (complex)
linear and pick any $g\in \Phi(E)^{\bot}\cap{\overline E}^{\,w^*}$. We have $ig\in \Phi(E)^{\bot}\cap{\overline{iE}}^{\,w^*}$. Note that $\overline{\Re_X (E)}$ and $\overline{\Re_X (iE)}$ are linear. Therefore, by Proposition \ref{p:complexBasic} and the definition of $\Phi$, we have 
$\Re_X(g)\in \Phi_0(\Re(E))^{\bot}\cap{\overline{\Re_X (E)}}^{\,w^*}$ and $\Re_X(ig)\in \Phi_0(\Re_X(iE))^{\bot}
\cap{\overline{\Re_X(iE)}}^{\,w^*}$; hence, $\Re_X(g) = 0$ and $\Re_X(ig) = 0$. Hence, by Proposition \ref{p:complexBasic}, $g = 0$. Therefore, $\Phi(E)^{\bot}\cap{\overline E}^{\,w^*} = \{0\}$ and $\langle D,\Phi\rangle$ is a PG in $X$.
\end{proof}

\section{Construction of projectional skeletons using rich families}

We start with the following instrument for constructing projections, see \cite[Lemma 6.1.1]{f}.

\begin{lemma}\label{l:jednaProjekce}Let $(X,\|\cdot\|)$ be a (real or complex) Banach space, $r\geq 1$, and $D\subset X^*$ a closed linear $r$-norming subspace.
Assume there are closed linear subspaces $V\subset X$ and $Y\subset D$ such that
\begin{itemize}
	\item[(i)] for every $v\in V$ we have $\|v\|\leq r\!\cdot\!  \sup\,\{|y(v)|\setsep y\in Y\cap B_{X^*}\}$ and
	\item[(ii)] $V$ separates the points of $\overline{Y}^{\,w^*}$, that is, $V^{\bot}\cap \overline{Y}^{\,w^*} = \{0\}$.
\end{itemize}

Then there exists  a projection $P:X\to X$ such that $\|P\|\leq r$, $PX=V$, $P^{-1}(0)=Y_\perp$, 
and $P^*X^*=\overline{Y}^{\,w^*}$.
\end{lemma}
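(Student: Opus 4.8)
The plan is to construct $P$ as a composition of the restriction-to-$V$ map with the inverse of the isometry-type map that hypothesis (i) makes available, following the template of \cite[Lemma 6.1.1]{f}. First I would observe that hypothesis (i) says exactly that the canonical map $\rho: Y \to V^*$, $\rho(y) := y|_V$, satisfies $\|v\| \le r\|\rho^{-1}\|^{-1}\cdots$; more precisely, (i) gives that for $v \in V$ one has $\|v\| \le r \sup\{|\langle \rho(y), v\rangle| : y \in Y \cap B_{X^*}\}$, which will let me control preimages under $\rho$. By Goldstine's theorem (in the form that $\overline{Y}^{\,w^*} = \overline{B}^{\,w^*}$-type density) together with (ii), I would check that the weak$^*$ closure $\overline{Y}^{\,w^*}$ is also mapped nicely; but the cleanest route is to work directly with the bidual.

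The key steps, in order: (1) Let $j: V \hookrightarrow X$ be the inclusion, so $j^*: X^* \to V^*$ is the restriction map $x^* \mapsto x^*|_V$, which is norm-decreasing and weak$^*$-to-weak$^*$ continuous. (2) Consider $j^*|_{\overline{Y}^{\,w^*}}: \overline{Y}^{\,w^*} \to V^*$. I claim this is a weak$^*$-to-weak$^*$ homeomorphism onto $V^*$: injectivity is precisely hypothesis (ii); for surjectivity, note $j^*(Y)$ is a subspace of $V^*$ that is $r^{-1}$-norming for $V$ by (i), hence $\overline{j^*(Y)}^{\,w^*} = V^*$ (a norming subspace is weak$^*$-dense), and since $\overline{Y}^{\,w^*}$ is weak$^*$-compact (it is weak$^*$-closed and, using (i) again, bounded subsets of $Y$ have bounded image forcing $\overline{Y}^{\,w^*}$ to be contained in a ball — one has to argue $Y$ itself cannot be "too large" off $V$, but $V^\bot \cap \overline{Y}^{\,w^*} = \{0\}$ plus norming gives that $j^*$ restricted to balls of $\overline{Y}^{\,w^*}$ is a homeomorphism onto balls of $V^*$ scaled by $r$), the image $j^*(\overline{Y}^{\,w^*})$ is weak$^*$-compact, hence weak$^*$-closed, hence all of $V^*$. (3) Let $T := (j^*|_{\overline{Y}^{\,w^*}})^{-1}: V^* \to \overline{Y}^{\,w^*} \subset X^*$; from the norming estimate (i) one reads off $\|T\| \le r$. (4) Define $P := (j^* \circ T)^* \cdots$ — more concretely, define $P: X \to X$ by first embedding $x \mapsto \hat{x}|_V \in V^{**}$... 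Actually the slick definition: for $x \in X$, let $P x$ be the unique element of $V$ such that $\langle y, Px\rangle = \langle y, x\rangle$ for all $y \in Y$; existence and uniqueness of such $Px \in V$ follow because the functional $y \mapsto \langle y, x\rangle$ on $Y$ extends weak$^*$-continuously to $\overline{Y}^{\,w^*}$, and $\overline{Y}^{\,w^*} \cong V^{*}$ via $j^*$ means this functional is evaluation at some element of $V^{**}$... so one needs $V$ to be norm-one complemented, which is not given. The correct definition is $P x := \big(T^* \hat{x}\big)$ read back in $X$, where $\hat{x} \in X^{**}$; since $T^*: X^{**} \to V^{**}$ but we want to land in $V \subset X$, the point is that $T^*\hat{x}$ restricted appropriately lands in $V$ because $T$ has range in the weak$^*$-closure of $Y \subset X^*$...

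Let me state it as: define $P: X \to V \subset X$ by declaring, for $x \in X$, $Px$ to be the unique $v \in V$ with $x^*(v) = x^*(x)$ for every $x^* \in Y$ (equivalently $x - v \in Y_\perp$). The main obstacle, and the heart of the argument, is showing this $v$ exists: the linear functional $F_x: Y \to \mathbb{K}$, $F_x(y) = y(x)$, is bounded (by $\|x\|$) and extends by weak$^*$-continuity to $\widetilde{F_x}$ on $\overline{Y}^{\,w^*}$; via the homeomorphism $j^*: \overline{Y}^{\,w^*} \to V^*$ this corresponds to an element $\xi \in V^{**}$ with $\|\xi\| \le r\|x\|$, and one must upgrade $\xi$ from $V^{**}$ to an actual point $v \in V$ — this works precisely because $\widetilde{F_x}$ is in fact weak$^*$-continuous on $\overline{Y}^{\,w^*}$ viewed inside $X^*$ as evaluation at $x$, so $\xi = j^{**}(\hat{x})$ composed correctly is weak$^*$-continuous on $V^*$, hence lies in $V$. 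Once $P$ is well-defined: $\|P\| \le r$ is immediate from $\|v\| \le r\|x\|$ above; $PX = V$ since $Pv = v$ for $v \in V$; $P^2 = P$ and hence $P$ is a projection; $P^{-1}(0) = Y_\perp$ by construction; and $P^* X^* = \overline{Y}^{\,w^*}$ follows because $\langle P^* x^*, x\rangle = \langle x^*, Px\rangle$ and $Px \in V$ with $x - Px \in Y_\perp$ shows $P^* x^*$ agrees with $x^*$ on... tracking this through gives $\operatorname{ran} P^* = \{x^* : x^*|_{Y_\perp} = x^*\}$-type description which equals $(Y_\perp)^\bot = \overline{Y}^{\,w^*}$ by bipolar theorem. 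I would also remark that in the complex case every step is literally the same, using Proposition~\ref{p:complexBasic} only implicitly through the convention that "linear" includes scalar multiplication by $i$, so no separate treatment is needed.
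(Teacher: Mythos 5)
Your underlying idea --- define $Px$ as the unique $v\in V$ with $x-v\in Y_\perp$ --- is exactly the projection the lemma asks for (the projection onto $V$ along $Y_\perp$, the same one the paper constructs), and uniqueness does follow from (i) since $V\cap Y_\perp=\{0\}$. The trouble is the existence of such a $v$, which is where all the work lies, and your route to it through $V^{**}$ has two genuine gaps. First, your surjectivity argument for $j^*|_{\overline{Y}^{\,w^*}}:\overline{Y}^{\,w^*}\to V^*$ rests on the claim that $j^*(\overline{Y}^{\,w^*})$ is weak$^*$-compact because $\overline{Y}^{\,w^*}$ is ``contained in a ball''; this is false, since $\overline{Y}^{\,w^*}$ is a weak$^*$-closed linear subspace, unbounded unless trivial. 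What is true, and what you would need, is that $\overline{Y\cap B_{X^*}}^{\,w^*}$ is weak$^*$-compact and that, by (i) together with a Hahn--Banach separation (bipolar) argument inside $V^*$, its image under $j^*$ contains $\frac{1}{r}B_{V^*}$; this gives surjectivity with the bound $\|T\|\le r$, but none of it is carried out in your text. Second, and more seriously, the decisive step ``$\xi$ \dots is weak$^*$-continuous on $V^*$, hence lies in $V$'' presupposes that the inverse map $T=(j^*|_{\overline{Y}^{\,w^*}})^{-1}$ is weak$^*$-to-weak$^*$ continuous. But weak$^*$-continuity of $T$ is equivalent to $T$ being the adjoint of an operator $X\to V$, i.e.\ essentially to the existence of the projection you are trying to build, so invoking your unproved ``homeomorphism'' claim (itself based on the flawed compactness assertion) is circular. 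The repair is to restrict to the dual ball, where compactness does give that $T|_{B_{V^*}}$ is weak$^*$-continuous, and then apply the Krein--\v Smulian/Banach--Dieudonn\'e corollary that a linear functional on $V^*$ which is weak$^*$-continuous on $B_{V^*}$ is weak$^*$-continuous, hence an element of $V$. Neither ingredient appears in the proposal, and the several abandoned definitions in the middle signal that the argument was never brought to a complete form.

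For comparison, the paper's proof stays entirely on the predual side and is much lighter: from (i) one gets $V\cap Y_\perp=\{0\}$ and $\|v\|\le r\,\|v+x\|$ for all $v\in V$, $x\in Y_\perp$, so the algebraic projection of $V+Y_\perp$ onto $V$ along $Y_\perp$ has norm at most $r$ and $V+Y_\perp$ is closed; if $V+Y_\perp\neq X$, a nonzero functional vanishing on it would lie in $V^\bot\cap(Y_\perp)^\bot=V^\bot\cap\overline{Y}^{\,w^*}$ by the bipolar theorem, contradicting (ii); and $P^*X^*=\overline{Y}^{\,w^*}$ follows from $P^{-1}(0)=Y_\perp$ by the same bipolar identity. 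If you supply the two missing ingredients above, your dual-space argument can be completed, but it ends up invoking Krein--\v Smulian machinery to prove a statement the paper obtains from Hahn--Banach and the bipolar theorem alone.
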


\begin{proof} Fix a rectangle $V\!\times\! Y\subset X\!\times\! D$ as above. Then for each $v\in V\cap Y_{\bot}$ 
we have $\|v\|\leq r\!\cdot\!  \sup\big\{|y(v)|\setsep y\in Y\cap B_{X^*}\big\} = 0$; hence, $V\cap Y_{\bot} = \{0\}$. Moreover, for each $v\in V$ and $x\in Y_{\bot}$ we have
$$
\|v\|\leq r\!\cdot\!  \sup\big\{|y(v)|\setsep y\in Y\cap B_{X^*}\big\} = r\!\cdot\!  \sup\big\{|y(v+x)|\setsep y\in Y\cap B_{X^*}\big \}\leq r\!\cdot\!  \|v+x\|;
$$
hence, the projection $P:V + Y_{\bot}\longrightarrow V$ defined by $V + Y_{\bot}\ni (v+x)\longmapsto v=:Px$ is 
(complex) linear (if $X$ is complex) and has norm $\leq r$. It follows that $V + Y_{\bot}$ is a closed subspace of $X$. We actually have that $V + Y_{\bot} = X$. Assume this is not so, i.e., there is $x\in X\setminus (V+Y_\perp)$. Then there is $0\neq x^*\in (V + Y_{\bot})^{\bot}$. Thus $x^*\in V^{\bot}\cap (Y_{\bot})^{\bot} = V^{\bot}\cap \overline{Y}^{\,w^*}$, where for the last equality we used the (complex) bipolar theorem \cite[Theorem 3.38]{ff}. This is in contradiction with the condition (ii). Therefore, the projection $P$ defined above has $X$ as its domain. From the above we also have that $PX=V$ and $P^{-1}(0)=Y_{\bot}$.
The last equality follows from this via \cite[Corollary 3.34]{ff}.
\end{proof}

Now, we show that a rich family consisting of certain rectangles already gives us a projectional skeleton.

\begin{lemma}\label{l:staci}Let $(X,\nn)$ be a (real or complex) Banach space, $r\geq 1$, $D\subset X^*$ a closed linear $r$-norming subspace, and 
assume that there exists a rich family $\Gamma\subset \SS_\oo(X\!\times\! D)$ such that for every $\gamma:=V\!\times\! Y\in\Gamma$ there is a projection $Q_\gamma:X\to X$ with $\|Q_\gamma\|\leq r$, $Q_\gamma X=V$, $Q_\gamma{}^{-1}(0)=Y_\perp$, and $Q_\gamma{}^*X^*=\overline{Y}^{\,w^*}$.

Then 
$\big(Q_\gamma:\ \gamma\in\Gamma\big)$ is an $r$-projectional skeleton in $X$ with $\bigcup_{\gamma\in\Gamma}{Q_\gamma}^*X^*\supset D$, 
where we consider on $\Gamma$ the order given by the inclusion.
\end{lemma}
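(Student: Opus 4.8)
The plan is to verify the four axioms (i)--(iv) of a projectional skeleton directly from the hypotheses, relying only on the combinatorics of a rich family and elementary facts about idempotents; the projections $Q_\gamma$ themselves are already given, so essentially all the work is bookkeeping. I would first record the structure of the index set: since $\Gamma$ is a rich family in $X\times D$, the poset $(\Gamma,\subseteq)$ is up-directed and $\sigma$-complete. Moreover, for an increasing sequence $\gamma_n=V_n\times Y_n$ in $\Gamma$ I claim $\sup_n\gamma_n=\overline{\bigcup_n V_n}\times\overline{\bigcup_n Y_n}$: this rectangle lies in $\Gamma$ by $\sigma$-closedness, it is an upper bound, and it is contained in every upper bound; the identity $\overline{\bigcup_n(V_n\times Y_n)}=\overline{\bigcup_n V_n}\times\overline{\bigcup_n Y_n}$ follows because both coordinate sequences increase. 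I would also note the obvious fact that, for rectangles, $V_1\times Y_1\subseteq V_2\times Y_2$ is equivalent to $V_1\subseteq V_2$ and $Y_1\subseteq Y_2$.

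Now for the axioms. Axiom (i) holds since $Q_\gamma X=V$ is separable. For (ii), given $x\in X$, cofinality of $\Gamma$ applied to $\closedSpan\{x\}\times\{0\}\in\SS(X\times D)$ yields $\gamma=V\times Y\in\Gamma$ with $x\in V=Q_\gamma X$. For (iii), take $s=V_1\times Y_1\subseteq t=V_2\times Y_2$ in $\Gamma$, so $V_1\subseteq V_2$ and $Y_1\subseteq Y_2$. Since $Q_t$ is a projection onto $V_2$ and $Q_s X=V_1\subseteq V_2$, we get $Q_t\circ Q_s=Q_s$. For the reverse composition, for every $x\in X$ we have $Q_tx-x\in Q_t^{-1}(0)=(Y_2)_\perp\subseteq (Y_1)_\perp=Q_s^{-1}(0)$ (the passage $Y_1\subseteq Y_2\Rightarrow (Y_2)_\perp\subseteq(Y_1)_\perp$ being immediate), hence $Q_s(Q_tx-x)=0$, i.e. $Q_s\circ Q_t=Q_s$. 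Finally, for (iv), if $s_n=V_n\times Y_n$ increases and $t:=\sup_n s_n$ is the rectangle identified above, then $Q_tX$ is its first coordinate $\overline{\bigcup_n V_n}=\overline{\bigcup_n Q_{s_n}X}$, as required.

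This shows $(Q_\gamma:\gamma\in\Gamma)$ is a projectional skeleton, and since $\|Q_\gamma\|\le r$ for all $\gamma$ by hypothesis it is an $r$-projectional skeleton. For the last assertion, fix $d\in D$; as $D$ is a closed subspace, $\closedSpan\{d\}\subseteq D$, so $\{0\}\times\closedSpan\{d\}\in\SS(X\times D)$ and cofinality gives $\gamma=V\times Y\in\Gamma$ with $d\in Y\subseteq\overline{Y}^{\,w^*}=Q_\gamma^*X^*$. Hence $D\subseteq\bigcup_{\gamma\in\Gamma}Q_\gamma^*X^*$.

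I expect no real difficulty: the whole proof is the verification just sketched. The one step that deserves a moment's care is the equality $Q_s\circ Q_t=Q_s$ in (iii), which rests on the order-reversing identity $(Y_2)_\perp\subseteq(Y_1)_\perp$ together with the description $Q_\gamma^{-1}(0)=Y_\perp$; a secondary point is correctly identifying the supremum of an increasing chain of rectangles so that (iv) becomes transparent. The complex case requires no change, since all subspaces in sight are complex and the $Q_\gamma$ are complex-linear.
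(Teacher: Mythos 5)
Your proposal is correct and follows essentially the same route as the paper: verify the four skeleton axioms directly, using cofinality of $\Gamma$ for $X=\bigcup_\gamma Q_\gamma X$ and $D\subset\bigcup_\gamma Q_\gamma{}^*X^*$, the nesting of ranges together with the reverse nesting of kernels $Y'_\perp\subset Y_\perp$ for the commutation relations, and the identification of suprema of increasing chains of rectangles for axiom (iv). You merely spell out a few steps (the kernel argument for $Q_s\circ Q_t=Q_s$ and the description of the supremum) that the paper leaves as immediate.
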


\begin{proof} Recall that our $\Gamma$ is $\sigma$-closed and thus it is suitable for indexing a skeleton.
We check the four properties from the definition of an $r$-projectional skeleton. The cofinality of (the up-directed poset) 
$\big(\SS_\oo(X\!\times\! D),$``$\subset$''$\big)$
immediately yields that $X=\bigcup_{\gamma\in\Gamma}Q_\gamma X$ and 
$\bigcup_{\gamma\in\Gamma}{Q_\gamma}^*X^*\supset D$. If $V\!\times\! Y\subset V'\!\times\! Y'$ are two rectangles from $\Gamma$, then 
$Q_{V\!\times\! Y}X = V\subset V' = Q_{V'\!\times\! Y'}X$ and $Q_{Y\!\times\! V}{}^{-1}(0)=Y_\perp \supset Y'_\perp = Q_{Y'\!\times\! V'}{}^{-1}(0)$,
which implies that $Q_{V\!\times\! Y} =  Q_{V'\!\times\! Y'}\circ Q_{V\!\times\! Y} = Q_{V\!\times\! Y}\circ Q_{V'\!\times\! Y'}$. Finally, consider an increasing sequence $\gamma_1\subset \gamma_2\subset\cdots$ in $\Gamma$ and put $\gamma:=\sup_{n\in\N}\gamma_n$.
This means that $\gamma=\overline{\gamma_1\cup\gamma_2\cup\cdots}\,$. Therefore
$Q_\gamma X = \overline{Q_{\gamma_1} X\cup Q_{\gamma_2} X\cup\cdots}\,$. 
\end{proof}

Next, we show how to produce a projectional skeleton from a projectional generator via rich families.

\begin{proposition}\label{p:main} Let $(X,\nn)$ be a (real or complex) Banach space admitting a projectional generator $\langle D,\Phi\rangle$ where $D\subset X^*$ is $r$-norming for some $r\ge1$.

Then there exists a rich family $\WW\subset \SS_\oo(X\!\times\! D)$ such that for every $\gamma:=V\!\times\! Y\in\WW$ there is a projection
$Q_\gamma:X\to X$ with $\|Q_\gamma\|\leq r$, $Q_\gamma X=V$, $Q_\gamma{}^{-1}(0)=Y_\perp$, and $Q_\gamma{}^*X^*=\overline{Y}^{\,w^*}$; 
and hence $(Q_\gamma :\ \gamma\in{\mathcal W}\big)$ is an $r$-projectional skeleton on 
$(X,\|\cdot\|)$, with $\bigcup_{\gamma\in{\mathcal W}}
Q_\gamma{}^*X^*\supset D$.
 
\end{proposition}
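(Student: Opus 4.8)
The plan is to build the rich family $\WW$ by constructing, for each separable ``seed'' rectangle $V_0 \times Y_0 \in \SS_\oo(X \times D)$, an enlargement $V \times Y$ that satisfies the hypotheses (i) and (ii) of Lemma~\ref{l:jednaProjekce}, and then to let $\WW$ consist of all such ``good'' rectangles, i.e.\ those $V \times Y$ for which $V$ separates the points of $\overline{Y}^{\,w^*}$ and for which $Y$ $r$-norms $V$ in the sense of condition (i). Once a rectangle is good, Lemma~\ref{l:jednaProjekce} hands us the required projection $Q_\gamma$ with $\|Q_\gamma\| \le r$, $Q_\gamma X = V$, $Q_\gamma{}^{-1}(0) = Y_\bot$, and $Q_\gamma{}^* X^* = \overline{Y}^{\,w^*}$; then Lemma~\ref{l:staci} immediately upgrades the family of these projections to an $r$-projectional skeleton with $\bigcup_{\gamma \in \WW} Q_\gamma{}^* X^* \supset D$. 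So the whole content of the proposition is: \emph{the family of good rectangles is rich}.

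First I would check $\sigma$-closedness of $\WW$. If $V_1 \times Y_1 \subset V_2 \times Y_2 \subset \cdots$ are good rectangles with $V := \overline{\bigcup_n V_n}$ and $Y := \overline{\bigcup_n Y_n}$, then condition~(i) passes to the limit by a routine density argument (every $v \in V$ is approximated by $v_n \in V_n$, and each $v_n$ is $r$-normed by $Y_n \cap B_{X^*} \subset Y \cap B_{X^*}$). For condition~(ii) one uses that if $x^* \in V^\bot \cap \overline{Y}^{\,w^*}$ then, since $\overline{Y}^{\,w^*} \supset \overline{Y_n}^{\,w^*}$ and $V \supset V_n$, one has to argue slightly more carefully; in fact it is cleaner to note that $V^\bot \cap \overline{Y}^{\,w^*} \subset \bigcap_n (V_n^\bot \cap \overline{Y}^{\,w^*})$ and combine with the projections $Q_{\gamma_n}$ already produced on the pieces, letting $n \to \infty$ exactly as in Lemma~\ref{l:staci}. (Alternatively: the limit rectangle is good because the limiting projection $Q_\gamma = \lim_n Q_{\gamma_n}$ exists by Lemma~\ref{l:upDirected} applied to the $r$-skeleton already built on the cofinal chain, and its range/kernel identify $V$ and $Y_\bot$ as required.)

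The main work, and the main obstacle, is \textbf{cofinality}: given an arbitrary $Z \in \SS(X \times D)$, I must find a good rectangle containing it. This is the standard ``closing-off'' construction driven by the projectional generator. Start with countable sets $C_0 \subset X$, $E_0 \subset D$ with $\overline{C_0} \times \overline{E_0} \supset Z$, and build increasing countable sets $C_0 \subset C_1 \subset \cdots \subset X$ and $E_0 \subset E_1 \subset \cdots \subset D$ by alternately applying two closure operations: (a) enlarge $E_{n}$ to a countable $\Q$-linear (or $(\Q+i\Q)$-linear, in the complex case) set that, together with $\Phi$ of the current data, still sits inside $D$ and such that $E_n \cap B_{X^*}$ $r$-norms the current $\spn C_n$ up to the countable witnessing set — concretely, for each $v$ in a countable dense subset of $\spn_\Q C_n$ pick countably many $y \in E_{n+1} \cap B_{X^*}$ realizing the supremum in the $r$-norming inequality for $D$; and (b) enlarge $C_n$ by throwing in $\Phi(F)$ for all finite (or suitable countable) subsets $F$ of the current $E$-side, so that in the limit $\Phi(Y)^\bot \cap \overline{Y}^{\,w^*} \subset V$. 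Set $V := \closedSpan\bigl(\bigcup_n C_n\bigr)$ and $Y := \closedSpan\bigl(\bigcup_n E_n\bigr)$ (with $iV = V$, $iY = Y$ in the complex case, which is automatic from the $(\Q+i\Q)$-linear choices). Then (i) holds by step~(a) and a density/continuity argument, and (ii) holds because $V^\bot \cap \overline{Y}^{\,w^*} \subset \Phi(E)^\bot \cap \overline{E}^{\,w^*} = \{0\}$ for the countable $E = \bigcup_n E_n$, using the defining property of the projectional generator $\langle D, \Phi\rangle$ together with the fact that $\overline{E} = Y$ is linear. Hence $V \times Y$ is a good rectangle containing $Z$, and $\WW$ is cofinal. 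Care is needed in the complex case to keep everything $(\Q+i\Q)$-rationally spanned so the limiting subspaces are genuinely complex-linear and $\Phi(E)$ is applied to a set whose closed span is complex-linear; but this is exactly the bookkeeping already rehearsed in Proposition~\ref{l:richSubspace} and Proposition~\ref{p:pg}, so I would handle it by the same device. $\eop$
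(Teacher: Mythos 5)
Your overall architecture (Lemma~\ref{l:jednaProjekce} for each individual projection, Lemma~\ref{l:staci} to assemble the skeleton, and a closing-off construction under $\Phi$ together with countable norming witnesses for cofinality) is the same as the paper's, and your cofinality step is essentially correct. The genuine gap is in your definition of $\WW$ and the ensuing $\sigma$-closedness step: the family of \emph{all} ``good'' rectangles, i.e.\ all $V\times Y\in\SS_\oo(X\times D)$ satisfying (i) and (ii) of Lemma~\ref{l:jednaProjekce}, is in general \emph{not} $\sigma$-closed, so the claim you need there is false, not merely under-argued. Concretely, take $X=c$, $D=X^*$, $V_n:=\{x\in c\setsep x_k=0 \text{ for all } k>n\}$ and $Y_n:={\rm sp}\{e_1^*,\dots,e_n^*\}$ (coordinate functionals). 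Each $V_n\times Y_n$ satisfies (i) with $r=1$ and (ii), but for the limit rectangle $V=c_0$, $Y=\closedSpan\{e_k^*\setsep k\in\N\}$ one has $Y_\perp=\{0\}$, hence $\overline{Y}^{\,w^*}=X^*$, while $V^\perp=c_0^{\perp}\neq\{0\}$ (it contains the limit functional), so (ii) fails in the limit. Note also that your inclusion $V^\bot\cap\overline{Y}^{\,w^*}\subset\bigcap_n\big(V_n^\bot\cap\overline{Y}^{\,w^*}\big)$ gives nothing, since $\bigcap_n V_n^\perp=V^\perp$. The same example kills the fallback argument: the truncation projections $Q_{\gamma_n}$ do not converge pointwise (test on $x=(1,1,1,\dots)$), and Lemma~\ref{l:upDirected} cannot be invoked because it presupposes that the chain sits inside an already-given $r$-projectional skeleton — its proof uses exactly the supremum property (iv) that you are in the middle of establishing — so that route is circular.

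The repair, which is what the paper does, is to build the closure properties into the membership condition instead of demanding goodness directly: fix for every $x\in X$ a countable set $\psi(x)\subset D\cap B_{X^*}$ with $\|x\|\le r\cdot\sup\{|v(x)|\setsep v\in\psi(x)\}$, and let $\WW$ be the family of those $V\times Y\in\SS_\oo(X\times D)$ that admit countable dense witness sets $C\subset V$, $E\subset Y$ with $\Phi(E)\subset C$ and $\psi(C)\subset E$. Then $\sigma$-closedness is immediate (take unions of the witness sets), cofinality is precisely your closing-off argument (with $\Q$- or $(\Q+i\Q)$-linear spans so that $\overline E$ is linear), and goodness of every member becomes a theorem rather than the definition: condition (i) follows from $\psi(C)\subset E$ by density and uniform continuity, and condition (ii) from the projectional generator via $V^\perp\cap\overline{Y}^{\,w^*}=C^\perp\cap\overline{E}^{\,w^*}\subset\Phi(E)^\perp\cap\overline{E}^{\,w^*}=\{0\}$. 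With $\WW$ so defined, your appeal to Lemma~\ref{l:jednaProjekce} and Lemma~\ref{l:staci} goes through verbatim.
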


\begin{proof}
For every $x\in X$ pick a countable set $\psi(x)\subset D\cap B_{X^*}$ such that $\|x\|\leq r \cdot \sup\big\{v(x)\setsep v\in \psi(x)\big\}$.
Define $\WW$ as the family of all $V\!\times\! Y\in\SS_\oo(X\!\times\! D)$ such that there are
countable sets $C\subset V,\  E\subset Y$ satisfying $\overline C=V,\ \overline E=Y,\ 
\Phi(E)\subset C$, and $\psi(C)\subset E$. 

In order to verify the cofinality of $\WW$, fix any $Z\in\SS(X\!\times\! D)$.
Find countable sets $C_0\subset X,\ E_0\subset D$ such that $\overline{C_0}\!\times\!\overline{E_0}
\supset Z$. Let $n\in\N$ and assume that we have already constructed countable sets
$C_{n-1}\subset X,\ E_{n-1}\subset D$. If the Banach space $X$ is over the field of reals, put  
$C_n:=\qespan\big(C_{n-1}\cup\Phi(E_{n-1})\big)$ and $E_n:=\qespan\big(E_{n-1}\cup\psi(C_n)\big)$; otherwise, put $C_n:=\qeispan\big(C_{n-1}\cup\Phi(E_{n-1})\big)$ and $E_n:=\qeispan\big(E_{n-1}\cup\psi(C_n)\big)$. 
Doing so for every $n\in\N$, put finally $C:=C_0\cup C_1\cup\cdots$ and $E:=E_0\cup E_1\cup\cdots$.
Clearly, $C$ and $E$ are countable, $V\!\times\! Y:=\overline C\!\times\! \overline E\in\SS_\oo(X\!\times\! D)$ and 
$V\!\times\! Y\supset Z$. Also, clearly, $\Phi(E)\subset C$ and $\psi(C)\subset E$. 
Thus $V\!\times\! Y\in\WW$ and the cofinality of $\WW$ is proved. 

Further, let $V_1\!\times\! Y_1, V_2\!\times\! Y_2,\ \ldots$ be an increasing sequence in $\WW$ and put
$V\!\times\!Y:=\overline{V_1\!\times\! Y_1 \cup V_2\!\times\! Y_2\cup \cdots}\,$. Clearly, $V\!\times\! Y\in
\SS_\oo(X\!\times\! D)$.
For every $n\in\N$ find countable sets $C_n\subset V_n,\ E_n\subset Y_n$ satisfying $\overline{C_n}=V_n,\
\overline{E_n}=Y_n,\ \Phi(E_n)\subset C_n$, and $\psi(C_n)\subset E_n$. 
Put $C:=C_1\cup C_2\cup\cdots$ and $E:=E_1\cup E_2\cup\cdots$.
Clearly $\overline C=V,\ \overline E=Y,\ \Phi(E)=\Phi(E_1)\cup\Phi(E_2)\cup\cdots \subset
C_1\cup C_2\cup\cdots=C$, and $\psi(C)=\psi(C_1)\cup\psi(C_2)\cup\cdots\subset E_1\cup E_2\cup
\cdots=E$. Therefore, $V\!\times\! Y\in\WW$ and the $\sigma$-closeness of our family is verified.

Let us check that $\WW$ has the further proclaimed properties. So, fix any $V\!\times\! Y$ in $\WW$.
Find $C\subset V,\  E\subset Y$ satisfying $\overline C=V,\ \overline E=Y,\ 
\Phi(E)\subset C$, and $\psi(C)\subset E$. For every $x\in C$ we have
\[
\|x\|\leq r \cdot \sup\big\{|v(x)|\setsep v\in \psi(x)\big\}\leq r\!\cdot\!  \sup\big\{|v(x)|\setsep v\in Y\cap B_{X^*}\big\}.
\]
Further we have
\[
V^\perp\cap {\overline Y}^{\,w^*}= C^\perp\cap{\overline E}^{\,w^*}
\subset \Phi(E)^\perp\cap {\overline E}^{\,w^*}=\{0\},
\]
the last equality being true since $\Phi$ was a projectional generator. Hence, the assumptions of Lemma~\ref{l:jednaProjekce} are satisfied.
Consequently, for each $V\!\times\! Y\in \WW$, we have the projection $Q_{V\!\times\! Y}$.
The rest of the conclusion follows from Lemma \ref{l:staci}.
\end{proof}

Given an $r\ge1$, a Banach space $(X,\|\cdot\|)$ is called $r$-\it Plichko \rm if there exists a linearly dense set $M\subset X$
such the set of all $x^*\in X^*$ with at most countable support supp$_M(x^*):=\{m\in M:\ x^*(m)\neq0\}$ is $r$-norming.
We can easily verify that, if $M$ is as above, then the set
\begin{eqnarray}\label{pl}
D:=\big\{x^*\in X^*:\ {\rm supp}_M(x^*)\ \  \hbox{is at most countable}\ \ \big\}
\end{eqnarray}
is linear, norm closed, and is such that $\overline{C}^{\, w^*}\subset D$ whenever $C$ is a countable subset of $D$.

It is well known that every weakly Lindel\"of determined space $X$ admits a linearly dense set $M$ such that
supp$_M(x^*)$ is at most countable for every $x^*\in X^*$, see, e.g. \cite[Theorem 5]{fgmz}. Thus WLD spaces are $1$-Plichko.
Examples of Plichko spaces occur broadly in functional analysis. 
In particular, $L^1(\mu)$ spaces, with a non-negative $\sigma$-additive measure, order continuous Banach lattices, 
$C(G)$ spaces where $G$ is a compact abelian group, and preduals of von Neumann algebras are such. 


\begin{lemma}\label{l:komutuje}Let $X$ be a (real or complex) Banach space. Assume there is a linearly dense set $M\subset X$ and a subspace $D\subset X^*$ such that, for every $x^*\in D$, its support $\supp_M(x^*)$ is countable.

Then the family $\RR: = \{V\!\times\! Y\in \SS_\oo(X\!\times\! D)\setsep M\setminus V \subset Y_\bot\}$ is rich in $X\!\times\! D$.
\end{lemma}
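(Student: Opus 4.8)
The statement is that $\RR := \{V\times Y \in \SS_\oo(X\times D) : M\setminus V \subset Y_\bot\}$ is rich in $X\times D$, so I must verify the two defining properties of a rich family: $\sigma$-closedness and cofinality. The condition $M\setminus V \subset Y_\bot$ says precisely that every $y^*\in Y$ vanishes on all of $M$ except possibly on $M\cap V$; equivalently $\supp_M(y^*)\subset V$ for each $y^*\in Y$.

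First I would check $\sigma$-closedness. Let $V_1\times Y_1 \subset V_2\times Y_2 \subset \cdots$ be an increasing sequence in $\RR$, and put $V\times Y := \overline{\bigcup_n V_n\times Y_n} = \overline{\bigcup_n V_n}\times\overline{\bigcup_n Y_n}$. We need $M\setminus V \subset Y_\bot$. Take $m\in M\setminus V$; then $m\notin V_n$ for every $n$, so $m\in (Y_n)_\bot$ for every $n$, i.e. $y^*(m)=0$ for every $y^*\in\bigcup_n Y_n$. Since the evaluation $z^*\mapsto z^*(m)$ is norm-continuous on $X^*$, it follows that $y^*(m)=0$ for every $y^*\in\overline{\bigcup_n Y_n}=Y$; hence $m\in Y_\bot$. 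This gives $V\times Y\in\RR$.

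Next, cofinality. Fix $Z\in\SS(X\times D)$; choose countable $C_0\subset X$, $E_0\subset D$ with $\overline{C_0}\times\overline{E_0}\supset Z$. I would build countable sets $C_0\subset C_1\subset\cdots$ in $X$ and $E_0\subset E_1\subset\cdots$ in $D$ by an alternating construction, closing off the obstruction: given $E_{n-1}$, each $y^*\in E_{n-1}$ has countable support $\supp_M(y^*)\subset M$, so $S_n := \bigcup_{y^*\in E_{n-1}}\supp_M(y^*)$ is a countable subset of $M$; put $C_n := \qespan(C_{n-1}\cup S_n)$ (or $\qeispan$ in the complex case). Then set $E_n := \qespan(E_{n-1})$ (respectively $\qeispan$), which adds nothing new but keeps bookkeeping uniform — in fact one may simply take $E_n = E_{n-1}$ here. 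Let $C := \bigcup_n C_n$, $E := \bigcup_n E_n$, and $V\times Y := \overline{C}\times\overline{E}$. Then $V\times Y\supset Z$ and $V\times Y\in\SS_\oo(X\times D)$ since $E\subset D$ and $D$ is a subspace (it is closed under the relevant scalars). It remains to see $M\setminus V\subset Y_\bot$: if $m\in M$ and $m\notin V$, then in particular $m\notin C_n$ for all $n$, so $m\notin S_n$ for all $n$, which means no $y^*\in\bigcup_n E_n = E$ has $m$ in its support, i.e. $y^*(m)=0$ for all $y^*\in E$; by norm-continuity of evaluation at $m$, this extends to all $y^*\in\overline{E}=Y$, so $m\in Y_\bot$. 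Hence $V\times Y\in\RR$, proving cofinality.

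**Main obstacle.** There is no deep obstacle; the only point requiring slight care is the bookkeeping of the inductive "saturation": one must make sure that the set one is trying to separate from $M$, namely $V = \overline{C}$, already contains the supports of \emph{all} the functionals that end up in $Y=\overline{E}$, and this is exactly why the construction must add $\supp_M(y^*)$ for $y^*\in E_{n-1}$ to $C_n$ at the next stage, and why $E$ is built as an increasing union so that every element of $E$ appeared at some finite stage $E_{n-1}$ and thus had its support absorbed into $C_n\subset V$. The passage from "$y^*(m)=0$ for $y^*\in E$" to "for $y^*\in\overline{E}$" uses only that point-evaluation is norm-continuous, which is immediate. The complex case differs only in replacing $\qespan$ by $\qeispan$ to keep the spans closed under multiplication by $\Q+i\Q$ (and hence their closures closed under multiplication by $\C$), exactly as in the proof of Proposition~\ref{p:main}.
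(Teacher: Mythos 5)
Your proof is correct and follows essentially the same route as the paper: cofinality by absorbing the (countable) union of supports $\bigcup_{e\in E_0}\supp_M(e)$ into the $X$-coordinate, and $\sigma$-closedness via the annihilator/continuity argument $M\setminus V\subset\bigcap(M\setminus V_n)\subset\bigcap (Y_n)_\bot=\big(\bigcup Y_n\big)_\bot=Y_\bot$. The only difference is that you set up an inductive saturation which, as you yourself note, is unnecessary here because the functional side never grows; the paper performs the enlargement in a single step.
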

\begin{proof}In order to verify the cofinality of $\RR$, fix any $Z\in\SS(X\!\times\! D)$. Find countable sets $C_0\subset X,\ E_0\subset D$ such that $\overline{C_0}\!\times\!\overline{E_0}\supset Z$.
	Put $C:= C_0 \cup \bigcup_{e\in E_0}\supp_M(e)$. Then it is easy to see that $\closedSpan C\times \overline{E_0}\in \RR$; hence, the cofinality of $\RR$ is proved. Further, let $V_1\!\times\! Y_1,\ V_2\!\times\! Y_2,\ \ldots$ be an increasing sequence in $\RR$ and put $V\!\times\!Y:=\overline{V_1\!\times\! Y_1 \cup V_2\!\times\! Y_2\cup \cdots}\,$. Then $V=\overline{V_1\cup V_2\cdots}$ and $Y=\overline{Y_1\cup Y_2\cup\cdots}$, and we have
$$
M\setminus V\subset M\setminus \hbox{$\bigcup$}V_n = \hbox{$\bigcap$}\big(M\setminus V_n\big) \subset \hbox{$\bigcap$} Y_n{}_\perp 
=\big(\hbox{$\bigcup$} Y_n\big)_\perp = Y_\perp\,.
$$
Thus the $\sigma$-closeness of our family is verified.
\end{proof}

\begin{theorem}\label{plicko} {\rm \cite[Proposition 21]{ku}}
Let $(X,\|\cdot\|)$ be an $r$-Plichko real or complex Banach space, with an $M\subset X$ witnessing for that. 

Then it admits a {\tt commutative} $r$-projectional skeleton.
In more details, if $D$ is defined by (\ref{pl}), the skeleton can be of form $(Q_\gamma :\ \gamma\in{\Gamma}\big)$, where
$\Gamma$ is a rich family in $\SS_\oo(X\!\times\! D)$, and for every $\gamma:=V\!\times\! Y\in\Gamma$ we have
$Q_\gamma X=V$, $Q_\gamma{}^{-1}(0)=Y_\perp$, $Q_\gamma{}^*X^*=\overline{Y}^{\,w^*}$; 
moreover $\bigcup_{\gamma\in{\Gamma}} Q_\gamma{}^*X^*= D$. 
\end{theorem}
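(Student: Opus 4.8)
The plan is to deduce Theorem~\ref{plicko} from the machinery already assembled, namely Proposition~\ref{p:main} together with the additional commutativity coming from Lemma~\ref{l:komutuje}. First I would record the elementary observation (already stated just before Lemma~\ref{l:komutuje}) that the set $D$ defined by~\eqref{pl} is a norm-closed linear $r$-norming subspace of $X^*$ which is moreover \emph{countably $w^*$-closed}, i.e. $\overline{C}^{\,w^*}\subset D$ for every countable $C\subset D$; the last property is what lets us treat $\overline{Y}^{\,w^*}$ as living inside $D$ when $Y$ is separable. Then I would exhibit a projectional generator $\langle D,\Phi\rangle$: using the linearly dense set $M$, for $x^*\in D$ simply set $\Phi(x^*):=\supp_M(x^*)\subset M\subset X$, which is countable by hypothesis. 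To check the defining property, take $E\in[D]^{\le\omega}$ with $\overline E$ linear and $g\in\Phi(E)^\bot\cap\overline E^{\,w^*}$; since $g$ annihilates $\Phi(E)=\bigcup_{e\in E}\supp_M(e)$ it annihilates $\bigcup_{e\in E}\supp_M(e)$, and as $g\in\overline E^{\,w^*}\subset D$ (countable $w^*$-closure stays in $D$) its own support is contained in $\bigcup_{e\in E}\supp_M(e)$; hence $g$ vanishes on its whole support, so $g(m)=0$ for every $m\in M$, and linear density of $M$ gives $g=0$. Thus $\langle D,\Phi\rangle$ is a PG with $r$-norming domain $D$.

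Next I would invoke Proposition~\ref{p:main} with this $\langle D,\Phi\rangle$ to obtain a rich family $\WW\subset\SS_\oo(X\times D)$ and, for each $\gamma=V\times Y\in\WW$, a projection $Q_\gamma$ with $\|Q_\gamma\|\le r$, $Q_\gamma X=V$, $Q_\gamma^{-1}(0)=Y_\bot$, $Q_\gamma^*X^*=\overline Y^{\,w^*}$, yielding an $r$-projectional skeleton with $\bigcup_{\gamma\in\WW}Q_\gamma^*X^*\supset D$. To upgrade to the finer conclusion I would then intersect $\WW$ with the rich family $\RR$ of Lemma~\ref{l:komutuje}, i.e. put $\Gamma:=\WW\cap\RR$; since a finite (indeed countable) intersection of rich families is rich (cofinality and $\sigma$-closedness are both preserved under intersection), $\Gamma$ is rich in $\SS_\oo(X\times D)$, and by Lemma~\ref{l:staci} the subfamily $(Q_\gamma:\gamma\in\Gamma)$ is still an $r$-projectional skeleton with $\bigcup_{\gamma\in\Gamma}Q_\gamma^*X^*\supset D$. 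Because each $Q_\gamma^*X^*=\overline Y^{\,w^*}\subset D$ (using countable $w^*$-closedness of $D$ once more), we in fact get equality $\bigcup_{\gamma\in\Gamma}Q_\gamma^*X^*=D$.

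It then remains to prove commutativity of $(Q_\gamma:\gamma\in\Gamma)$, and this is where the extra condition $M\setminus V\subset Y_\bot$ built into $\RR$ is used. Fix $\gamma_1=V_1\times Y_1$ and $\gamma_2=V_2\times Y_2$ in $\Gamma$. The clean way is to show $Q_{\gamma_1}Q_{\gamma_2}$ and $Q_{\gamma_2}Q_{\gamma_1}$ agree on the linearly dense set $M$, hence everywhere. For $m\in M$: if $m\in V_1\cap V_2$ both composites fix $m$; if $m\notin V_2$ then $m\in Y_2{}_\bot=Q_{\gamma_2}^{-1}(0)$ so $Q_{\gamma_2}m=0$, and on the other side $Q_{\gamma_1}m\in V_1$, which one argues also lies in $Y_2{}_\bot$ because $Q_{\gamma_1}m$ is built from elements of $M\cap V_1$ — more precisely one should first verify the key structural fact that $Q_\gamma$ maps $M$ into $\closedSpan(M\cap V)$, which follows from $Q_\gamma X=V=\closedSpan(C)$ with $C$ the countable generating set and the way $C$ was produced (or, more robustly, by noting $Q_\gamma m=0$ for $m\notin V$ and $Q_\gamma m=m$ for $m\in V$, so actually $Q_\gamma m\in\{0\}\cup M$). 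With that in hand all four cases for the pair $(m\in V_1?,\ m\in V_2?)$ are immediate. The main obstacle I anticipate is precisely this last point: one has to be careful that the projections coming out of Lemma~\ref{l:jednaProjekce} send elements of $M$ either to themselves or to $0$ according to membership in $V$ — for $m\notin V=Q_\gamma X$ one needs $m\in Q_\gamma^{-1}(0)=Y_\bot$, which is exactly what $M\setminus V\subset Y_\bot$ (the defining property of $\RR$) guarantees — and then $Q_\gamma m\in V$ forces, for $m\notin V'$, that $Q_{\gamma'}(Q_\gamma m)$ be computed knowing $Q_\gamma m$ itself is $m$ or $0$, closing the loop. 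Everything else is a routine unwinding of the definitions of rich families and of the projections supplied by Proposition~\ref{p:main}.
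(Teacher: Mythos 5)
Your proposal is correct and follows essentially the same route as the paper: the projectional generator $\Phi(x^*)=\supp_M(x^*)$ with domain $D$, Proposition~\ref{p:main} to get the skeleton on a rich family $\WW$, intersection with the family $\RR$ of Lemma~\ref{l:komutuje}, commutativity via the observation that $Q_{V\times Y}m=m$ for $m\in V$ and $Q_{V\times Y}m=0$ for $m\in M\setminus V$ (using $M\setminus V\subset Y_\perp=Q_\gamma^{-1}(0)$) together with linear density of $M$, and the equality $\bigcup_\gamma Q_\gamma{}^*X^*=D$ from the countable weak$^*$-closedness of $D$ applied to a countable dense subset of the separable $Y$. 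The only cosmetic difference is your initial detour through ``$Q_\gamma$ maps $M$ into $\closedSpan(M\cap V)$'' before settling on the clean two-case formula, which is exactly the paper's argument.
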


\begin{proof} The proof of Kubi\'s in \cite{ku} uses the method of elementary submodels from logic. 
Here we present an elementary argument via rich families.
For $x^*\in D$ we put $\Phi(x^*):={\rm supp}_M(x^*)$. We claim that $\langle D,\Phi\rangle$ is a PG on our $X$.
We already know that $D$ a linear closed and $r$-norming subspace of $X^*$. Now, consider any $E\in[D]^{\le\omega}$ such that
$\overline E$ is linear. We have to verify that $\Phi(E)^\perp\cap{\overline E}^{\, w^*}$ is $\{0\}$.
So pick any $x^*$ in this intersection and assume that $x^*\neq0$. Find $m\in M$ so that $x^*(m)\neq0$.
Find then $e\in E$ so that $e(m)\neq0$; thus $m\in\,$supp$_M(e)\ (=\Phi(e)\subset \Phi(E)\big)$.
But $x^*\in\Phi(E)^\perp\ \big(\subset\{m\}^\perp\big)$; a contradiction. 
Now being sure that the couple $\langle D,\Phi\rangle$ is a PG on $X$, with $D$ an $r$-norming subspace, 
let $(Q_\gamma:\ \gamma\in\WW)$ be the skeleton from Proposition~\ref{p:main}.

Let $\RR$ be the rich family provided by Lemma \ref{l:komutuje} for our $D$ and $M$. Put $\Gamma: = \WW\cap\RR$; 
this is again a rich family in $X\times D$.
Clearly, $\big(Q_\gamma:\ \gamma\in\Gamma\big)$ is still an $r$-projectional skeleton and 
$\bigcup_{\gamma\in\Gamma}{Q_\gamma}^*X^*=\bigcup_{\gamma\in\WW}{Q_\gamma}^*X^*\ (\supset D)$.
Now, we observe that for every $\gamma:=V\!\times\!Y\in\Gamma$ and every $m\in M$ we have 	
\[
	Q_{V\times Y}(m) = \begin{cases} m, & {\rm if}\ \ m\in V\\
						                       0, & {\rm if}\ \ m\in X\setminus V.
						         \end{cases}
\]
Therefore, $Q_\gamma \circ Q_{\gamma'} (m) = Q_{\gamma'}\circ Q_{\gamma} (m)$ for every $m\in M$ and every $\gamma,\gamma'\in\Gamma$. 
And since $M$ was linearly dense in $X$, we get that $Q_\gamma \circ Q_{\gamma'} = Q_{\gamma'}\circ Q_\gamma$ 
for every $\gamma,\gamma'\in\Gamma$. The commutativity of our skeleton was thus verified. 

It remains to verify the last equality. We already know that the inclusion ``$\supset $'' holds.
Conversely, fix any $\gamma:=V\times Y\in\Gamma$. As $Y$ is separable, it contains a countable dense set $C$.
Now, $D$ being (obviously), by its definition, ``countably closed'' in the weak$^*$ topology, 
we get that $\big(Q_\gamma{}^*X^*=\big) \ \ {\overline Y}^{\,w^*}={\overline C}^{\,w^*}\subset D$ 
\end{proof}

\bf Remark. \rm The theorem above can be converted to the equivalence: \sl Given an $r\ge1$, a real or complex Banach space is $r$-Plichko if and only if it admits a commutative $r$-projectional skeleton, \rm see \cite[Theorem 27]{ku}. It should be stressed that 
the sufficiency was proved in \cite{ku} by ``elementary'' tools.
\medskip

Further, we show that it is possible to find a quite nice description of projectional skeletons in duals to Asplund spaces.
This is a strengthening of Theorem~\ref{t:asplundRichFamily}.

\begin{theorem}\label{t:asplund}Let $(X,\|\cdot\|)$ be a (real or complex) Asplund space. Then there exists a rich family $\AA\subset \SS_\oo(X\!\times\! X^*)$ such that:

\noindent (i) $\forall \, V\!\times\! Y,\,  V'\!\times\! Y'\!\in\AA\ \  \ \ V\subset V'\ \ \Longleftrightarrow \ \ Y\subset Y'\ \ 
(\Longleftrightarrow \ \ V\!\times\! Y\subset V'\!\times\! Y').$


\noindent (ii) The family $\AA_X\!:= \big\{V\in\SS(X):\ V\!\times\!Y\in\AA$ for some $Y\in\SS(X^*)\big\}$ is rich in $\SS(X)$.

\noindent (iii) The family $\AA^{X^*}\!:= \big\{Y\in\SS(X^*):\ V\!\times\!Y\in\AA$ for some $V\in\SS(X)\big\}$ is rich in $\SS(X^*)$.

\noindent (iv) There are one-to-one inclusion preserving mappings between $\AA,\, \AA_X$, and $\AA^{X^*}$.

\noindent (v) For every $\gamma\!:=V\!\times\!Y\in \AA$ there is a projection  $P_\gg: X^*\rightarrow X^*$ such that 
$\|P_\gg\|=1,\ P_\gg X^*=Y,\ P_\gg{}^{-1}(0)=V^\bot$, and $P_\gg{}^*X^{**}=\overline{V}^{\,w^*}$. 

\noindent (vi) $\big(P_\gg:\ \gg\in\AA\big)$ is a $1$-projectional skeleton on $(X^*, \|\cdot\|)$ with 
$\bigcup\big\{P_\gg{}^*X^{**}:\ \gg\in\AA\big\}\supset X$.

\noindent (vii) The skeleton from (vi) can be indexed also by the rich families 
$\AA_X$ or $\AA^{X^*}$.

\end{theorem}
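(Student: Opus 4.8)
The plan is to \emph{not} build $\AA$ from scratch but to refine the rich family furnished by Theorem~\ref{t:asplundRichFamily}. First I would take the rectangle‑family $\AA_0\subset\SS_\oo(X\times X^*)$ which is rich in $X\times X^*$, is monotone in the sense that $V_1\subset V_2$ forces $Y_1\subset Y_2$, and has the property that for every $V\times Y\in\AA_0$ the restriction $Y\ni y\mapsto y|_V\in V^*$ is a surjective linear isometry. From such a rectangle I would manufacture the projection $P_\gamma$ of (v) by applying Lemma~\ref{l:jednaProjekce} to the Banach space $X^*$, with the $1$‑norming subspace $X\subset X^{**}$ playing the role of ``$D$'', with $Y$ playing the role of ``$V$'' and $V$ playing the role of ``$Y$''. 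The first hypothesis of Lemma~\ref{l:jednaProjekce} — that $V$ be $1$‑norming for $Y$ — is exactly the isometric part of the restriction map; the separation hypothesis $Y^\bot\cap\overline V^{\,w^*}=\{0\}$ (closure taken in $X^{**}$) I would verify by the standard ``factoring through the restriction'' argument: any $\xi$ in that intersection is a weak$^*$‑limit of a net from $V$, hence $\xi(x^*)$ depends on $x^*\in X^*$ only through $x^*|_V$, so $\xi$ induces some $\bar\xi\in V^{**}$, and $\xi\in Y^\bot$ together with the \emph{surjectivity} of $y\mapsto y|_V$ forces $\bar\xi=0$, hence $\xi=0$. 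Lemma~\ref{l:jednaProjekce} then yields $P_\gamma\colon X^*\to X^*$ with $\|P_\gamma\|=1$, $P_\gamma X^*=Y$, $P_\gamma^{-1}(0)=V^\bot$ and $P_\gamma{}^*X^{**}=\overline V^{\,w^*}$, which is (v). Property (vi) then follows routinely: $\AA_0$ is $\sigma$‑complete as a poset, the ranges $Y$ are separable, cofinality of $\AA_0$ gives $X^*=\bigcup_\gamma P_\gamma X^*$ and $\bigcup_\gamma P_\gamma{}^*X^{**}\supset\bigcup_\gamma V\supset X$, the compatibility $P_{\gamma'}\circ P_\gamma=P_\gamma=P_\gamma\circ P_{\gamma'}$ for $\gamma\subset\gamma'$ comes from $Y\subset Y'$ (for the first composition) and from $V\subset V'$, i.e. $V'^\bot\subset V^\bot$ (for the second), and the limit axiom from $Y=\overline{\bigcup Y_n}$ along an increasing sequence.

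The heart of the proof is property (i), precisely the implication $Y\subset Y'\Rightarrow V\subset V'$, which $\AA_0$ need not satisfy: one can exhibit rectangles in $\AA_0$ with $Y\subset Y'$ but $V\not\subset V'$, since a given separable subspace of $X^*$ can be the restriction‑isometric dual of many incomparable separable subspaces of $X$. To remedy this I would intersect $\AA_0$ with a second rich family $\RR$ designed so that on $\AA:=\AA_0\cap\RR$ each of the spaces $V$, $Y$ is recovered from the other. Fix a norming selector $\psi\colon X^*\to[X]^{\le\omega}$ with $\|x^*\|=\sup\{|x^*(v)|:v\in\psi(x^*)\}$ and — using that separable subspaces of the Asplund space $X$ have separable dual — a map $\phi\colon X\to[X^*]^{\le\omega}$ that on each separable $E\subset X$ returns Hahn–Banach extensions of a norm‑dense countable subset of $E^*$. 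Let $\RR$ be the rich family, obtained by the standard diagonal closure of the proof of Proposition~\ref{p:main}, of those rectangles $V\times Y$ possessing countable sets $C\subset Y$, $E\subset V$ with $\overline C=Y$, $\overline E=V$, $\phi(E)\subset C$ and $\psi(C)\subset E$. On $\AA$ the restriction $Y\to V^*$ is still a surjective isometry, so (v)–(vi) persist; the gain is that the closure conditions, combined with that surjective isometry, let one identify a dense subset of $Y$ with a dense subset of $V^*$ and pull it back into $V$ by $\psi$, so that an inclusion $Y\subset Y'$ between two members of $\AA$ propagates through the closure conditions to give first $C\subset V'$ and then $V\subset V'$. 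The delicate point — the main obstacle — is arranging the closure scheme so that the ``seed'' of each rectangle is itself reconstructible from $Y$ (equivalently from $V$), i.e. that no extraneous generators survive; this is where both the surjectivity in Theorem~\ref{t:asplundRichFamily}(iii) and the specific forms of $\phi,\psi$ are used essentially.

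Granting (i), the remaining items are soft. For (ii) and (iii): $\AA_X$ and $\AA^{X^*}$ are cofinal because $\AA$ is cofinal in $X\times X^*$, and they are $\sigma$‑closed because, by (i), an increasing sequence in $\AA_X$ (respectively in $\AA^{X^*}$) has its partners forming an increasing sequence of rectangles in $\AA$, whose $\sigma$‑closure again lies in $\AA$. For (iv): by (i) the assignments $\gamma=V\times Y\mapsto V$ and $\gamma\mapsto Y$ are injective and inclusion‑preserving with inclusion‑preserving inverses onto $\AA_X$ and $\AA^{X^*}$, and composing them gives the order isomorphism $\AA_X\ni V\leftrightarrow Y\in\AA^{X^*}$. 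Finally, for (vii): transporting the skeleton $(P_\gamma:\gamma\in\AA)$ along these order isomorphisms yields $1$‑projectional skeletons indexed by $\AA_X$ and by $\AA^{X^*}$, since an order isomorphism of up‑directed $\sigma$‑complete posets carries projectional skeletons to projectional skeletons.

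In summary: $\AA_0$ already delivers (v), (vi), and one direction of (i); the real work is to shrink $\AA_0$ to a rich subfamily on which the bijection $V\leftrightarrow Y$ is an order isomorphism, after which (ii)–(iv) and (vii) are formal. I expect essentially all the difficulty to be concentrated in showing that the back‑and‑forth closure producing $\RR$ can be made ``rigid'' in the above sense while remaining rich.
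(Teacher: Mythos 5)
Most of your outline is sound and matches the paper where it overlaps: producing $P_\gamma$ from an isometric-restriction rectangle (the paper simply sets $P_\gamma x^*:=R_\gamma{}^{-1}(x^*|_V)$ rather than invoking Lemma~\ref{l:jednaProjekce} in $X^*$ with $D=X$, but your verification of the hypotheses is correct), getting (vi) from Lemma~\ref{l:staci}, and deducing (ii)--(iv) and (vii) once (i) is known. The gap is exactly where you locate ``the delicate point'': the implication $Y\subset Y'\Rightarrow V\subset V'$ is the whole content of the theorem beyond Theorem~\ref{t:asplundRichFamily}, and your closure scheme does not yield it. Membership of $V\times Y$ in your family $\RR$ is witnessed by \emph{some} countable sets $C\subset Y$, $E\subset V$ with $\phi(E)\subset C$, $\psi(C)\subset E$, and these witnesses are not canonical: if $V\times Y,\ V'\times Y'\in\AA_0\cap\RR$ and $Y\subset Y'$, you only know $C\subset Y'=\overline{C'}$, not $C\subset C'$, so the conditions $\psi(C')\subset E'$ say nothing about $\psi(C)$, and nothing forces $\psi(C)$ --- let alone all of $V=\overline{E}$, where $E$ may contain generators completely unrelated to $\psi(C)$ --- into $V'$. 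Note also that $Y$ genuinely fails to determine $V$ a priori: in $c_0(\Gamma)$ the one-dimensional space $Y=\spn\{e_{\gamma_0}^*\}$ restricts isometrically onto $V^*$ both for $V=\spn\{e_{\gamma_0}\}$ and for $V=\spn\{e_{\gamma_0}+\tfrac12 e_{\gamma_1}\}$, so some additional rigidifying structure must be imported; pointwise norming selectors and Hahn--Banach extensions alone do not provide it, and you never show they do.

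The paper supplies precisely this missing device through a Markushevich basis of $X^*$, whose existence for duals of Asplund spaces is Theorem~\ref{t:asplundPClass} (the $\PP$-class argument --- a nontrivial ingredient absent from your proposal). Writing $I$ for the set of basis vectors, one has $\closedSpan I=X^*$, $i\notin\closedSpan(I\setminus\{i\})$ for each $i\in I$, and $\card I\ge\dens X^*=\dens X$, so one may fix a dense set $\{x_i:\ i\in I\}$ in $X$ and let $\WW$ consist of the rectangles $\closedSpan\{x_i:\ i\in C\}\times\closedSpan C$ with $C\in[I]^{\le\omega}$. This family is rich, and the minimality of the basis makes the generating set $C$ recoverable from $Y=\closedSpan C$, so that $Y\subset Y'$ forces $C\subset C'$ and hence $V\subset V'$. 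Intersecting $\WW$ with the family $\AA_1$ of Theorem~\ref{t:asplundRichFamily}(iii) (which gives the converse implication $V\subset V'\Rightarrow Y\subset Y'$) produces $\AA$ satisfying (i), after which (ii)--(vii) follow as in your last paragraph. To complete your argument you would have to replace the $\phi,\psi$-closure by such an indexing (or some other mechanism making $V$ a canonical, inclusion-monotone function of $Y$ on a rich subfamily); as written, the central claim remains unproved.
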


\begin{proof}
Since $X$ is Asplund, the density of $X$ is equal to the density of $X^*$. For a hint how to prove this we refer a reader
to, say \cite[pp. 488--489]{ff} (note that, by Proposition \ref{p:complexBasic}, it is enough to prove the statement for real Banach spaces). Another, less elementary way to check this equality is via 
\cite[Proposition 1]{cf1}. Further, by Theorem~\ref{t:asplundPClass}, $X^*$ admits a
Markushevich basis. Let $I$ denote the ``bottom'' part of such a basis.
We recall that $\card I\ge \dens X^*$, that $\closedSpan I= X^*$, and that $i\not\in
\overline{\rm sp}\big(I\setminus\{i\}\big)$ for every $i\in I$. Pick a set $\{x_i:\ i\in I\}$ dense in $X$. 
Define then the family $\WW$ as that consisting of all rectangles
$\closedSpan\{x_i:\ i\in C\}\!\times \!\closedSpan C$ where $C$'s run through all countable subsets of $I$.
Clearly, the family $\WW$ is cofinal in $\SS_\oo(X\!\times\! X^*)$. As regards the $\sigma$-closeness of it, 
consider a sequence $V_1\kk Y_1 \subset V_2\kk Y_2\subset \cdots $ in $\WW$ and put
$V:=\overline{V_1\cup V_2\cup\cdots},\ Y:=\overline{Y_1\cup Y_2\cup\cdots}\,$. 
Clearly, $V\kk Y=\overline{V_1\kk Y_1\cup V_2\kk Y_2\cup\cdots}\,$. Now for $j\in\N$ find a countable set $C_j\subset I$ such that
$V_j= \closedSpan \big\{x_i:\ i\in C_j\big\}$ and $Y_j=\closedSpan C_j$. Put $C:=C_1\cup C_2\cup\cdots$; this is a countable set.
It is routine to check that $V=\overline{\rm sp}\big\{x_i:\ i\in C\big\}$ and $Y=\closedSpan C$. 
Hence $V\kk Y \in \WW$ and the $\sigma$-closeness of $\WW$ is verified. Therefore $\WW$ is a rich family.

We further observe that
\begin{eqnarray}\label{8} 
\forall\ V\kk Y,\ V'\kk Y'\in\WW\quad \ Y\subset Y' \Longrightarrow V\subset V'.
\end{eqnarray}
Indeed, fix any $V\kk Y,\ V'\kk Y'\in \WW$ such that $Y\subset Y'$. Find countable sets $C, C'\subset I$ such that
$Y=\closedSpan C$ and $Y'=\closedSpan C'$. It is enough to show that $C\subset C'$. So, fix any $i\in C$. Then $i\in Y\ (\subset Y'=
\closedSpan C')$. It remains to show that $i\in C'$. Assume that $i\not\in C'$. Then $(i\in)\ \closedSpan C' =
\closedSpan\big(C'\setminus\{i\}\big)\subset\closedSpan\big(I\setminus \{i\}\big)\ (\not\ni i)$,
a contradiction.

Now, let $\AA_1$ be the rich family in $\SS_\oo(X\!\times\! X^*)$ found in Theorem~\ref{t:asplundRichFamily} (iii). 
Note that 
\begin{eqnarray}\label{9} 
\forall\ V\kk Y,\ V'\kk Y'\in\AA_1\quad \ V\subset V' \Longrightarrow Y\subset Y'.
\end{eqnarray}
Put $\AA:= \WW\cap\AA_1$; this is a rich family. It is routine to verify that
$\AA$ satisfies (i) -- (iv). 

As regards (v), by Theorem~\ref{t:asplundRichFamily}, for every $\gamma:=V\kk Y\in\AA$ 
the assignment $Y\ni x^*\longmapsto x^*|_V=:R_\gamma x^*$ is a linear surjective isometry, and hence
$P_\gamma: X^*\rightarrow X^*$ defined by $P_\gamma x^*:= R_\gamma{}^{-1}(x^*|_V),\ x^*\in X^*$,
is a linear norm-$1$ projection satisfying all the proclaimed properties. 

Concerning (vi), it remains to profit from (v) and use Lemma \ref{l:staci}.

(vii) follows immediately from (vi), (i), (ii), and (iii).
\end{proof}

\bf Remark. \rm A main novelty of Theorem~\ref{t:asplund}, when comparing with Theorem~\ref{t:asplundRichFamily} or \cite[Theorem 2.3]{cf2}, is that 
$V\subset V'$ whenever $V\!\times\!Y,\, V'\!\times\!Y'\in \AA$ and $Y\subset Y'$. This enables to find 
a $1$-projectional skeleton on $(X^*,\|\cdot\|)$ indexed by the (rich) family of the ranges of projections, that is, by $\AA^{X^*}$.
This was reached via the instrument of Markushevich bases. 

\bf Remark. \rm Let us note that it is also possible to characterize Asplund spaces using the notion of a projectional skeleton. Namely, \sl a Banach space $X$ is Asplund if and only if there exists a projectional skeleton $(P_\gamma:\ \gamma\in\Gamma)$ on $X^*$ with $X\subset \bigcup\big\{P_\gamma{}^*X^{**}:\ \gamma\in\Gamma\big\}$, \rm see \cite[Proposition 26]{ku}. This result follows also from our construction (and without using any instrument from logic). 
Indeed, if $X$ is Asplund, then the existence of the desired projectional skeleton comes from Theorem~\ref{t:asplund}. On the other hand, let $(P_\gamma:\ \gamma\in\Gamma)$ be a projectional skeleton on $X^*$ with $X\subset \bigcup\big\{P_\gamma{}^*X^{**}:\ \gamma\in\Gamma\big\}$. Let $Z\subset X$ be any separable subspace of $X$. Find $\gamma\in\Gamma$ so big that $P_\gamma{}^{*}X^{**}\supset Z$; its existence follows from the properties of $\Gamma$. We claim that $Z^*$ is a quotient of $P_\gamma X^*$, via the mapping $P_\gamma X^*\ni\xi\longmapsto \xi|_Z
\in Z^*$. So, fix any $z^*\in Z^*$. Find an $x^*\in X^*$ such that $x^*|_Z=z^*$. Then $P_\gamma x^*\in P_\gamma X^*$ and for every $z\in Z$ we have 
$\langle P_\gamma x^*,z\rangle = \langle x^*,P_\gamma{}^*z\rangle = \langle x^*,z\rangle
= \langle z^*,z\rangle$. 
Thus, $P_\gamma x^*|_Z= z^*$. Now, knowing that $Z^*$ is a continuous image of the (separable) space $P_\gamma X^*$, we conclude that $Z^*$ is separable.
\medskip

Finally we show what happens when intersecting the class of Asplund spaces with that of WCG spaces.

\begin{theorem}\label{t:Asplund+WCG}
Let $(X,\nn)$ be a (real or complex) Asplund space which is weakly compactly generated. 

Then there exists a rich family
$\Gamma$ in $\SS_\oo(X\!\times\! X^*)$ such that for every $\gamma:=V\!\times\! Y$ in
$\Gamma$ there is a norm $1$-linear projection $Q_\gamma: X\rightarrow X$ such that
$(Q_\gamma:\ \gamma\in\Gamma)$ is a {commutative} $1$-projectional skeleton on 
$(X,\nn)$ and $(Q_\gamma{}^*:\ \gamma\in\Gamma)$ is a {commutative} $1$-projectional skeleton on 
$(X^*,\nn)$. The both skeletons can be indexed also by the rich family  
$\big(Q_\gamma X\setsep \gamma\in\Gamma\big)\subset \SS(X)$ or by the rich family $\big(Q_\gamma{}^* X^*\setsep \gamma\in\Gamma\big)\subset \SS(X^*)$.
In particular, $(X,\nn)$ admits a projectional resolution of the identity
such that the adjoint projections form a projectional resolution of the identity on $(X^*,\nn)$.
\end{theorem}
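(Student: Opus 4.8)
The strategy is to combine the two skeleton constructions already available: the one on $X$ coming from the projectional generator of a WCG space (Proposition~\ref{p:main} together with Theorem~\ref{plicko}, since WCG spaces are $1$-Plichko with $D=X^*$), and the one on $X^*$ coming from the Asplund structure (Theorem~\ref{t:asplund}). The key point is that both can be realized as families of rectangles inside $\SS_\oo(X\!\times\! X^*)$, so one can intersect them to obtain a single rich family $\Gamma$ of rectangles $\gamma=V\!\times\!Y$ that simultaneously carries a norm-$1$ projection $Q_\gamma:X\to X$ with $Q_\gamma X=V$, $Q_\gamma{}^{-1}(0)=Y_\bot$, $Q_\gamma{}^*X^*=\overline Y^{\,w^*}$, \emph{and} a norm-$1$ projection $P_\gamma:X^*\to X^*$ with $P_\gamma X^*=Y$, $P_\gamma{}^{-1}(0)=V^\bot$. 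The natural guess is then that $Q_\gamma{}^*=P_\gamma$ (or at least that $P_\gamma$ does the job of the adjoint), which is what ties the two skeletons together.

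First I would fix a weakly compact linearly dense convex set $K\subset X$ and the associated projectional generator $\langle X^*,\Phi\rangle$ described in the preliminaries, with $D=X^*$ (which is trivially $1$-norming); since $X$ is WCG it is WLD, hence $1$-Plichko, so Theorem~\ref{plicko} supplies a rich family $\WW_1\subset\SS_\oo(X\!\times\! X^*)$ and norm-$1$ projections $Q_\gamma$ with $Q_\gamma X=V$, $Q_\gamma{}^{-1}(0)=Y_\bot$, $Q_\gamma{}^*X^*=\overline Y^{\,w^*}$, and moreover $(Q_\gamma)$ is commutative. Second, from Theorem~\ref{t:asplund} applied to the Asplund space $X$, I get a rich family $\AA\subset\SS_\oo(X\!\times\! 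X^*)$ with property (i) — the key equivalence $V\subset V'\iff Y\subset Y'$ — and norm-$1$ projections $P_\gamma:X^*\to X^*$ with $P_\gamma X^*=Y$, $P_\gamma{}^{-1}(0)=V^\bot$, $P_\gamma{}^*X^{**}=\overline V^{\,w^*}$, forming a $1$-projectional skeleton. Note also that for $V\!\times\!Y\in\AA$ the restriction map $Y\ni y\mapsto y|_V\in V^*$ is an onto isometry, so $Y=\overline Y^{\,w^*}|$-closed in the relevant sense; in particular $\overline Y^{\,w^*}\cap$ (things) behaves well. Set $\Gamma:=\WW_1\cap\AA$; as an intersection of two rich families in $\SS_\oo(X\!\times\! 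X^*)$ this is again rich, and every $\gamma\in\Gamma$ carries both $Q_\gamma$ and $P_\gamma$.

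Third — and this is the heart of the matter — I must show $Q_\gamma{}^*=P_\gamma$ for $\gamma=V\!\times\!Y\in\Gamma$. Both are norm-$1$ projections on $X^*$; it suffices to match ranges and kernels. The kernel of $Q_\gamma{}^*$ is $(Q_\gamma X)^\bot=V^\bot=P_\gamma{}^{-1}(0)$, and the range of $Q_\gamma{}^*$ is $\overline Y^{\,w^*}$; meanwhile $P_\gamma X^*=Y$. So I need $\overline Y^{\,w^*}=Y$ for $\gamma\in\Gamma$, i.e. each such $Y$ is already weak$^*$ closed. This is where the Asplund side pays off: for $V\!\times\!Y\in\AA$ the map $Y\to V^*$ is an isometric isomorphism onto $V^*$, and one checks $Y$ is weak$^*$ closed — indeed $\overline Y^{\,w^*}$ restricts \emph{onto} $V^*$ isometrically as well (it is contained in $V^\bot$-quotient land and norms $V$), and by the isometry already being onto, no new functionals can appear without destroying injectivity of the restriction on $\overline Y^{\,w^*}$; more cleanly, $P_\gamma$ being a projection with $P_\gamma{}^{-1}(0)=V^\bot$ and $P_\gamma X^*=Y$ forces $Y$ to be a weak$^*$-closed complement of $V^\bot$, because $P_\gamma{}^*$ is weak$^*$-to-weak$^*$ continuous with $P_\gamma{}^*X^{**}=\overline V^{\,w^*}$, hence $P_\gamma$ itself is weak$^*$-to-weak$^*$ continuous and $Y=P_\gamma X^*$ is weak$^*$ closed. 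With $Y$ weak$^*$ closed, $Q_\gamma{}^*$ and $P_\gamma$ share range $Y$ and kernel $V^\bot$, so $Q_\gamma{}^*=P_\gamma$. Consequently $(Q_\gamma{}^*:\gamma\in\Gamma)=(P_\gamma:\gamma\in\Gamma)$ is a $1$-projectional skeleton on $X^*$; it is commutative because $(Q_\gamma)$ is commutative and $\gamma\mapsto Q_\gamma{}^*$ is an anti-homomorphism-free identity on compositions ($Q_\gamma{}^*Q_{\gamma'}{}^*=(Q_{\gamma'}Q_\gamma)^*=(Q_\gamma Q_{\gamma'})^*=Q_{\gamma'}{}^*Q_\gamma{}^*$). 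Finally, by property (i) of $\AA$ (inherited by $\Gamma$), the maps $\gamma\mapsto Q_\gamma X=V$ and $\gamma\mapsto Q_\gamma{}^*X^*=Y$ are inclusion-preserving bijections onto rich subfamilies of $\SS(X)$ and $\SS(X^*)$ respectively, so both skeletons can be re-indexed by $(Q_\gamma X)$ or by $(Q_\gamma{}^*X^*)$. The PRI statement then follows by the standard device (as in \cite[Proposition 6.1.10]{f}): choose a continuous chain of up-directed subsets of $\Gamma$ exhausting $X$, apply Lemma~\ref{l:upDirected} to both $(Q_\gamma)$ and $(Q_\gamma{}^*)$ to get limit projections $Q_{A_\alpha}$ and check $Q_{A_\alpha}{}^*=P_{A_\alpha}$ pointwise, yielding a PRI on $X$ whose adjoints form a PRI on $X^*$.

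The main obstacle is the identification $Q_\gamma{}^*=P_\gamma$, which hinges entirely on showing the spaces $Y$ appearing in $\Gamma$ are weak$^*$ closed; everything else (richness of the intersection, transport of commutativity, the re-indexing, the PRI corollary) is routine bookkeeping built on results already in the paper. A secondary subtlety is making sure the WCG-generator construction and the Asplund rich family can be arranged inside the \emph{same} ambient rectangle family $\SS_\oo(X\!\times\! X^*)$ with $D=X^*$ in both — but since $X^*$ is $1$-norming and an Asplund-space dual, Proposition~\ref{p:main} and Theorem~\ref{t:asplund} are both stated for exactly this $D$, so no adjustment is needed.
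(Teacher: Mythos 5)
Your overall architecture coincides with the paper's: both arguments take $\Gamma:=\WW_1\cap\AA$, where $\WW_1$ is the commutative rectangle skeleton coming from the WCG/Plichko side with $D=X^*$ and $\AA$ is the Asplund rectangle family of Theorem~\ref{t:asplund}, and both then identify $Q_\gamma{}^*$ with the dual projection $P_\gamma$, after which commutativity of the adjoints, re-indexing via property (i) of $\AA$, and the PRI statement are routine. The difference --- and the gap --- is in how you establish $Q_\gamma{}^*=P_\gamma$. You reduce it to showing that $Y$ is weak$^*$ closed (so that the range $\overline Y^{\,w^*}$ of $Q_\gamma{}^*$ equals the range $Y$ of $P_\gamma$), and your ``more cleanly'' justification is: ``$P_\gamma{}^*$ is weak$^*$-to-weak$^*$ continuous with $P_\gamma{}^*X^{**}=\overline V^{\,w^*}$, hence $P_\gamma$ itself is weak$^*$-to-weak$^*$ continuous.'' This is a non sequitur: every adjoint is weak$^*$-to-weak$^*$ continuous, so the premise carries no information, while weak$^*$-continuity of $P_\gamma$ as an operator on $X^*$ is equivalent to $P_\gamma$ being the adjoint of an operator on $X$ --- essentially the very statement $P_\gamma=Q_\gamma{}^*$ you are trying to prove. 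Your first, vaguer sketch (``no new functionals can appear without destroying injectivity of the restriction on $\overline Y^{\,w^*}$'') also omits the needed ingredient: that $Y$ norms $V$ does not by itself give $\overline Y^{\,w^*}\cap V^\bot=\{0\}$.

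The gap is repairable inside your own scheme, but only by actually invoking $Q_\gamma$: since $Q_\gamma{}^*X^*=\overline Y^{\,w^*}$, every $x^*\in\overline Y^{\,w^*}$ satisfies $x^*=x^*\circ Q_\gamma$, so if moreover $x^*\in V^\bot$ then $x^*=0$ because $Q_\gamma X=V$; hence $\overline Y^{\,w^*}\cap V^\bot=\{0\}$, and combined with the surjectivity of the restriction map $Y\to V^*$ coming from $\AA$ this yields $\overline Y^{\,w^*}=Y$, after which your range/kernel comparison does close the argument. The paper avoids the issue altogether by a direct computation: for $x\in X$ and $x^*\in X^*$ one has $\la P_\gamma x^*,x\ra=\la P_\gamma x^*,Q_\gamma x\ra$ (because $x-Q_\gamma x\in Y_\perp$ and $P_\gamma x^*\in Y$), which equals $\la x^*,Q_\gamma x\ra$ (because $P_\gamma x^*-x^*\in V^\bot$ and $Q_\gamma x\in V$), i.e.\ $\la Q_\gamma{}^*x^*,x\ra$; thus $P_\gamma=Q_\gamma{}^*$ with no weak$^*$-closedness discussion, and $Y=\overline Y^{\,w^*}$ falls out as a byproduct. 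As written, however, your key step does not stand.
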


\begin{proof}First, we recall a well known fact that, $X$ being WCG, there exists a linearly dense set $M\subset X$ such that $\supp_M(x^*)$ is countable for every $x^*\in X^*$; see e.g. \cite[Theorem 1.2.5]{f} or \cite[Theorem 1]{fgmz} for the case if $X$ is real. (Note that the tool of PRI is used in the proofs.)
If $X$ is a complex WCG space, then $X_R$ is WCG and so, by the already proved real case, there exists a linearly dense set $M\subset X$ such that $\supp_M(\Re_X(x^*))$ is countable for every $x^*\in X^*$. Hence, by Proposition \ref{p:complexBasic}, $\supp_M(x^*)$ is countable for every $x^*\in X^*$.

Since every WCG space admits a projectional generator with domain $X^*$ (which is 1-norming), by Proposition \ref{p:main}, there is a rich family $\WW_1$ 
in $\SS_\oo(X\!\times\! X^*)$ such that for each $V\!\times\! Y\in\WW_1$ there exists a projection $Q_{V\!\times\! Y}:X\to X$ with $\|Q_{V\!\times\! Y}\|=1$, $Q_{V\!\times\! Y}X=V$, $Q_{V\!\times\! Y}{}^{-1}(0)=Y_\perp$, and $Q_{V\!\times \!Y}{}^* X^*=Y^*$; moreover
$(Q_\gamma:\ \gamma\in\WW_1)$ is a {commutative} $1$-projectional skeleton on $(X,\nn)$.

Further, let $\AA$ be the rich family in $\SS_\oo(X\!\times\! X^*)$ (coming from the Asplund property of $X$) found in Theorem~\ref{t:asplund}. 
Put finally $\Gamma:=\WW_1\cap \AA$.

Now, fix any  $\gamma:=V\!\times\! Y\in\Gamma$. By the properties of $\AA$, there exists a projection $P_\gamma$ on $X^*$ with $P_\gamma X^* = Y$ and $P_\gamma^{-1}(0) = V^\perp$. Now, for every $x\in X$ and every $x^*\in X^*$ we have
\begin{eqnarray*}
\la P_\gamma x^*,x\ra&=&\la P_\gamma x^*,Q_\gamma x+(x-Q_\gamma x)\ra
=\la P_\gamma x^*,Q_\gamma x\ra \\ &=& 
\langle x^*+(P_\gamma x^*-x^*),Q_\gamma x\rangle = \la x^*,Q_\gamma x\ra =\la Q_\gamma{}^* x^*,x\ra.
\end{eqnarray*}
Therefore $P_\gamma=Q_\gamma{}^*$. 

Summarizing, $\big(Q_\gamma:\ \gamma\in\Gamma\big)$ is a {commutative} 1-projectional skeleton on $(X,\|\cdot\|)$ 
and $\big(Q_\gamma{}^*:\ \gamma\in\Gamma\big)$ is a {commutative} 1-projectional skeleton on $(X^*,\|\cdot\|)$. 
Moreover, by the properties of the family $\AA$ from Theorem~\ref{t:asplund}, 
the skeletons may be equivalently indexed by the rich families of the ranges of $Q_\gamma$'s or $Q_\gamma{}^*$'s.

A way how to produce projectional resolutions of the identity from 1-projectional skeletons
can be found in the proof of Theorem~\ref{t:asplundPClass} or in that of \cite[Theorem 12]{ku}.
\end{proof}

\bf Remark. \rm By Valdivia's result \cite[Theorem 8.3.3]{f}, every Asplund WLD space is WCG and therefore, in the theorem above, we can replace WCG by WLD.
It is not true that every Asplund Plichko space is WCG. A counter-example is the space $C([0,\omega_1])$ which is Asplund (see e.g. \cite[Theorem 14.25]{ff}) and Plichko (see e.g. \cite[Theorem 5.25]{hajek}) but not WCG; not even WLD (as, e.g., $[0,\omega_1]$ is not Corson).
\medskip

Our last observation is that it is possible to characterize Banach spaces which are both WCG and Asplund via  projectional skeletons.

\begin{theorem}\label{t:dodatek}For a Banach space $(X,\|\cdot\|)$ the following statements are equivalent:
  \begin{itemize}
   \item[(i)] $X$ is both Asplund and WCG.
   \item[(ii)] $X$ admits a ``shrinking'' projectional skeleton, that is, a projectional skeleton $(P_s:\ s\in\Gamma)$ such that $\big(P_s{}^*:\ \gamma\in\Gamma\big)$ is a projectional skeleton on $X^*$.
   \item[(iii)] $X$ admits a commutative $1$-projectional skeleton $(P_s:\ s\in\Gamma)$ such that
$\big(P_s{}^*:\ \gamma\in\Gamma\big)$ is a commutative $1$-projectional skeleton on $X^*$.
  \end{itemize}
\end{theorem}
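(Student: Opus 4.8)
The plan is to prove the three implications (iii)$\Rightarrow$(ii)$\Rightarrow$(i)$\Rightarrow$(iii), using the results already assembled in the paper. The implication (iii)$\Rightarrow$(ii) is immediate, since every commutative $1$-projectional skeleton is in particular a projectional skeleton (and the hypothesis that $(P_s{}^*)$ is a commutative $1$-projectional skeleton trivially gives that it is a projectional skeleton on $X^*$). So the bulk of the work is in (ii)$\Rightarrow$(i) and (i)$\Rightarrow$(iii).

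For (i)$\Rightarrow$(iii), I would simply invoke Theorem~\ref{t:Asplund+WCG}: if $X$ is both Asplund and WCG, that theorem produces a rich family $\Gamma\subset\SS_\oo(X\times X^*)$ and norm-$1$ projections $Q_\gamma$ such that $(Q_\gamma:\ \gamma\in\Gamma)$ is a commutative $1$-projectional skeleton on $X$ and $(Q_\gamma{}^*:\ \gamma\in\Gamma)$ is a commutative $1$-projectional skeleton on $X^*$. Taking $P_s:=Q_\gamma$ with $\Gamma$ as index set is exactly what (iii) asks for. This direction therefore costs essentially nothing beyond citing the theorem just proved.

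The heart of the matter is (ii)$\Rightarrow$(i). Suppose $(P_s:\ s\in\Gamma)$ is a projectional skeleton on $X$ whose adjoints $(P_s{}^*:\ s\in\Gamma)$ form a projectional skeleton on $X^*$. First I would show $X$ is Asplund by verifying condition (ii) of Theorem~\ref{t:asplundRichFamily}: given a separable subspace $Z\subset X$, use $X=\bigcup_{s\in\Gamma}P_sX$ together with $\sigma$-completeness and up-directedness of $\Gamma$ to find $s\in\Gamma$ with $P_sX\supset Z$ (a standard ``catching'' argument: build an increasing sequence $s_1\le s_2\le\cdots$ whose ranges exhaust a countable dense subset of $Z$ and pass to the supremum, using property (iv) of a projectional skeleton). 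Then $Z^*$ is a quotient of $(P_sX)^*$, and $(P_sX)^*$ is in turn a quotient of $P_s{}^*X^*$ (indeed a norm-one image, by the identity $\langle P_s{}^*x^*,z\rangle=\langle x^*,P_sz\rangle=\langle x^*,z\rangle$ for $z\in P_sX$, exactly as in the second Remark after Theorem~\ref{t:asplund}); since $P_s{}^*X^*$ is separable, so is $Z^*$, hence $X$ is Asplund. Next, to get WCG, I would show $X$ is WLD and then invoke Valdivia's theorem (cited in the Remark after Theorem~\ref{t:Asplund+WCG}: Asplund $+$ WLD $\Rightarrow$ WCG). To see $X$ is WLD, note that a space with a projectional skeleton on $X$ whose adjoints form a projectional skeleton on $X^*$ is $1$-Plichko in a strong sense — in fact the hypothesis that $(P_s{}^*)$ is a skeleton on $X^*$ forces $\bigcup_{s}P_s{}^*X^*=X^*$, so $X^*$ itself is the union of the ranges, which (via the characterization of Plichko spaces through commutative skeletons, the Remark after Theorem~\ref{plicko}, together with an argument that the skeleton may be taken commutative in the shrinking case, or more directly via \cite[Proposition 26]{ku}) yields that every $x^*\in X^*$ has countable support with respect to a suitable linearly dense $M\subset X$; that is precisely WLD.

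The step I expect to be the genuine obstacle is extracting WLD (equivalently, the full Plichko/countable-support property for {\it all} of $X^*$) from the mere existence of a shrinking projectional skeleton in the non-commutative case (ii). One clean route is to first upgrade: use the already-established fact that $X$ is Asplund, together with $X$ admitting a projectional skeleton on $X^*$ with $X\subset\bigcup P_s{}^*X^{**}$ — which is automatic here because $(P_s{}^*)$ being a skeleton on $X^*$ gives $\bigcup P_s{}^*X^*=X^*$ and hence, on passing to second adjoints, $X\subset\overline{\bigcup P_s{}^*X^{**}}^{\,w^*}$ — but to land inside WLD I would instead argue directly: since $(P_s{}^*)$ is a skeleton on $X^*$, a result of Kubi\'s (\cite[Theorem 27]{ku}, the equivalence quoted in the Remark after Theorem~\ref{plicko}, applied to the {\it commutative} skeleton that a shrinking one can be replaced by — or, avoiding commutativity, a direct support computation) shows $X^*$ is $1$-Plichko with a norming set of countably-supported functionals living in $X$, which is exactly the statement that $X$ is WLD. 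Combined with Asplundness and Valdivia's theorem this gives (i). If one wishes to sidestep the commutativity issue entirely, the alternative is to observe that in (ii) one may as well pass from $(P_s)$ to the commutative skeleton furnished by Theorem~\ref{t:Asplund+WCG} once Asplundness is in hand — but that is circular, so the honest path is the support argument, and that is where the care must go.
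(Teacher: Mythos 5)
The easy parts of your plan are fine and coincide with the paper: (i)$\Rightarrow$(iii) is exactly an appeal to Theorem~\ref{t:Asplund+WCG}, (iii)$\Rightarrow$(ii) is trivial, and your Asplund half of (ii)$\Rightarrow$(i) (catch a separable $Z$ inside some $P_sX$ by an increasing sequence and a supremum, then realize $Z^*$ as a continuous image of the separable space $P_s{}^*X^*$) is the same argument the paper gives. The genuine gap is in the WCG half of (ii)$\Rightarrow$(i). Your route is to deduce that $X$ is WLD from the shrinking skeleton and then invoke Valdivia's theorem (Asplund $+$ WLD $\Rightarrow$ WCG). But the only tool you actually point to --- the equivalence ``commutative projectional skeleton $\Leftrightarrow$ Plichko'' from \cite[Theorem 27]{ku} (the Remark after Theorem~\ref{plicko}) --- requires commutativity, which hypothesis (ii) does not provide; you yourself observe that upgrading to a commutative skeleton via Theorem~\ref{t:Asplund+WCG} would be circular. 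What is left in your text is the phrase ``a direct support computation,'' which is never carried out: you do not construct a linearly dense set $M\subset X$ on which every $x^*\in X^*$ is countably supported, nor do you give a precise citation of a statement yielding WLD from a (possibly non-commutative) projectional skeleton satisfying $\bigcup_{s}P_s{}^*X^*=X^*$. So the decisive implication is asserted rather than proved, at exactly the point you flag as ``the genuine obstacle.''

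For comparison, the paper closes precisely this hole by a transfinite induction on $\dens X$: after passing to a cofinal, up-directed, $\sigma$-closed subset of $\Gamma$ on which the norms are bounded by some $r$ (\cite[Proposition 9]{ku}), it builds from the skeleton an $r$-PRI $(Q_\alpha)$ on $X$ whose adjoints form an $r$-PRI on $X^*$, shows that each range $Q_\alpha X$ again carries a \emph{shrinking} $r$-projectional skeleton (by restricting the $P_s$ with $s$ in the $\sigma$-closure $\sigma^{\omega_1}(T_\alpha)$ and verifying the dual skeleton axioms for the restrictions), applies the induction hypothesis to conclude that each $Q_\alpha X$, hence each increment $(Q_{\alpha+1}-Q_\alpha)X$, is WCG, and finally glues weakly compact generating sets of the increments into one weakly compact linearly dense set, using that the adjoint increments span a dense subspace of $X^*$. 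If you wish to keep the WLD route, you would have to perform essentially the same amount of work (construct the PRI with well-behaved adjoints, restrict the skeleton to the ranges, induct on density, and glue countably-supporting Markushevich bases of the increments instead of weakly compact sets), or else quote a precise result from the literature that covers non-commutative skeletons with full induced subspace; as written, the proposal does neither.
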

\begin{proof}By Theorem~\ref{t:Asplund+WCG}, (iii) follows from (i). Trivially, (iii) implies (ii). It remains to prove that (i) follows from (ii). Let $(P_s:\ s\in\Gamma)$ be a shrinking projectional skeleton on $X$.

In order to prove that $X$ is Asplund, consider any separable subspace $Y$ of $X$. Pick $s\in\Gamma$ so big that $P_sX\supset Y$. Define $R: X^*\rightarrow Y^*$ by $Rx^*:= x^*|_Y,\  x^*\in X^*$. Then $R(P_s{}^*X^*)=Y^*$. Indeed, take any $y^*\in Y^*$. Find $x^*\in X^*$ such that $Rx^*=y^*$. Then, for every $y\in Y$ we have
\[
\la R(P_s{}^*x^*),y\ra = \la P_s{}^*x^*,y\ra = \la x^*,P_sy\ra =\la x^*,y\ra.
\]
Hence $R(P_s{}^*x^*)=y^*$. We proved that $R(P_s{}^*X^*)=Y^*$. Having this, realizing that $R$ is continuous, and that $P_s{}^*X^*$ is separable, we get that $Y^*$ is separable. Hence, by Theorem~\ref{t:asplundRichFamily}, $X$ is Asplund.

That $X$ is WCG will be proved by induction on $\dens X$.
If $X$ is separable, it is for sure WCG. Further, let $\kappa$ be 
an uncountable cardinal and assume that our theorem was proved for all $X$'s
with $\dens X<\kappa$. Now, assume that $\dens X=\kappa$. We shall proceed using the standard techniques; namely, we construct a ``PRI-like'' system of projections from the projectional skeleton as in the proof of \cite[Theorem 12]{ku} and then we use the method of ``gluing weakly compact sets'' as in \cite[Proposition 6.2.5 (i)]{f}.

Let us give more details. First, by \cite[Proposition 9]{ku}, there are a finite number $r\ge1$
and an up-directed and $\sigma$-closed subset of $\Gamma$, denoted for simplicity again as $\Gamma$, such that $\|P_\gamma\|\leq r$ for every $\gamma\in\Gamma$. Next, we shall construct an $r$-PRI $(Q_\alpha\setsep \omega\le\alpha\le\kappa)$ on $(X,\nn)$ such that
\begin{itemize}
 \item[(a)] $(Q_\alpha{}^*\setsep \omega\le\alpha\le\kappa)$ is an $r$-PRI on $(X^*,\nn)$, and
 \item[(b)] $Q_\alpha X$ admits a shrinking $r$-projectional skeleton for every $\alpha\in(\omega,\kappa)$.
\end{itemize}
(Here we recall the fact that, $X$ being Asplund, then $\dens X=\dens X^*$, see. e.g. \cite[Proposition 1]{cf1}.)
Let us decsribe the construction which is a modification of that from the proof of \cite[Theorem 17.6]{kubisKniha}. Find an increasing family $(T_\aa\setsep \omega < \aa<\kappa)$ of up-directed subsets of $\Gamma$ such that $\card T_\aa \le \card \aa$ for every $\aa\in(\omega,\kappa)$, that $T_\alpha = \bigcup_{\beta<\alpha}T_\beta$,
if $\alpha$ is a limit ordinal in $(\omega,\kappa)$, that $\bigcup\big\{P_sX\setsep s\in \bigcup_{\omega<\aa<\kappa}T_\aa\}$ is dense in $X$ and that $\bigcup\big\{P_s{}^*X^*\setsep s\in \bigcup_{\omega<\aa<\kappa}T_\aa\}$ is dense in $X^*$. 
Put $Q_\omega:=0$, $R_\omega:=0$, $Q_\kappa:={\rm id}_X$, $R_\kappa:={\rm id}_{X^*}$, and for every $\aa\in(\omega,\kappa)$, put
$$
Q_\aa x:=\lim_{s\in T_\aa}P_s x,\ x\in X;\quad {\rm and}\quad R_\aa x^*:=\lim_{s\in T_\aa}P_s{}^*x^*,\ x^*\in X^*.
$$
By \cite[Lemma 11]{ku}, $Q_\aa$ and $R_\aa$ are projections onto $\overline{\bigcup_{s\in T_\alpha} P_s X}$ and $\overline{\bigcup_{s\in T_\alpha} P_s{}^* X}$, respectively. Clearly, $\|Q_\aa\|\leq r$ and $\|R_\aa\|\leq r$ because $\|P_s\|\leq r$ for every $s\in\Gamma$. Fix $\alpha \leq \beta$. Then $T_\alpha\subset T_\beta$ and so we have $Q_\beta\circ Q_\alpha = Q_\alpha$ and $R_\beta\circ R_\alpha = R_\alpha$. We observe that
\begin{equation}\label{eq:podskeleton}
  \forall s\in T_\alpha:\quad P_s \circ Q_\beta = P_s.
\end{equation}
Indeed, for every $x\in X$ we have 
\[
P_s\circ Q_\beta x = P_s \circ\lim_{t\in T_\beta}P_t x = P_s \circ \lim_{t\in T_\beta, t\geq s} P_t x = \lim_{t\in T_\beta, t\geq s} P_s\circ P_t x = P_s x. 
\]
Hence, passing to a limit we get $Q_\alpha \circ Q_\beta = Q_\alpha$. Similarly, we get 
$R_\alpha\circ R_\beta = R_\alpha$. Now, it is easy to see that $(Q_\aa:\omega\le\aa\le\kappa)$ is an $r$-PRI on $(X,\nn)$ and
that $(R_\aa:\omega\le\aa\le\kappa)$ is an $r$-PRI on $(X^*,\nn)$. Moreover, for every $\aa\in(\omega,\kappa)$, every $x\in X$, and every $x^*\in X^*$ we have
$$
\la R_\aa x^*,x\ra=\la\lim_{s\in T_\aa} P_s{}^*x^*,x\ra=\lim_{s\in T_\aa}\la P_s{}^*x^*,x\ra=
$$ $$
=\lim_{s\in T_\aa}\la x^*,P_sx\ra = \la x^*,\lim_{s\in T_\aa} P_sx\ra=\la x^*,Q_\aa x\ra = \la Q_\aa{}^* x^*,x\ra.
$$ 
Hence $Q_\aa{}^* = R_\aa$ for every $\aa$. We thus proved that (a) holds.

In order to prove (b), fix for a while any $\alpha\in(\omega,\kappa)$. We shall inductively define $\sigma^\xi(T_\alpha)$ for $\xi \leq \omega_1$ in the following way:
\[\begin{split}
 \sigma^0(T_\alpha) & :=T_\alpha,\\
 \sigma^{\xi+1}(T_\alpha) & :=\big\{\sup\{s_1,s_2,\ldots\}:\ s_1,s_2,\ldots\in \sigma^\xi(T_\alpha)\ {\rm and}\ s_1\le s_2\le\cdots\big\},\\
 \sigma^\xi(T_\alpha) & :=\bigcup_{\beta < \xi}\sigma^\beta(T_\alpha)\text{ for a limit ordinal $\xi$.}
\end{split}\]
Then it is easy to see that $\sigma^{\omega_1}(T_\alpha)$ is the smallest $\sigma$-closed set containing $T_\alpha$. Morevoer, by induction,
it can be easily verified that $\sigma^\xi(T_\alpha)$ is an up-directed set for every $\xi$. In paricular, $\sigma^{\omega_1}(T_\alpha)=:\Gamma_\alpha$ is an up-directed and $\sigma$-closed set; thus it is suitable for indexing a projectional skeleton. Again, by induction, 
using \eqref{eq:podskeleton}, it can be easily verified that we have $P_s \circ Q_\alpha = P_s$ for every $s\in\Gamma_\alpha$. Now, we can easily check that $\big(P_s|_{Q_\alpha X}:\ s\in\Gamma_\alpha\big)$ is an $r$-projectional skeleton on $Q_\alpha X$.

Denote, for simplicity, $H_s:=P_s|_{Q_\alpha X}, \ s\in\Gamma_\alpha$. 
It remains to prove that the system $(H_s{}^*: s\in\Gamma_\alpha)$ is an $r$-projectional skeleton
on the dual space $(Q_\alpha X)^*$. For sure $\|H_s{}^*\|=\|H_s\|\le r$ for every $s\in\Gamma_\alpha$. We shall further verify all the 
properties (i)--(iv) from the definition of projectional skeleton. As regards (i), consider any $s\in\Gamma_\alpha$. We can easily verify that $H_s{}^*(Q_\aa X)^* \subset \big[P_s{}^*X^*\big]_{|Q_\aa X}$. Here, $P_s{}^*X^*$ is separable; hence so is 
$H_s{}^*(Q_\aa X)^*$. Let us prove (ii). Take any $y^*\in (Q_\aa X)^*$. Find $x^*\in X^*$ such that $x^*|_{Q_\aa X}=y^*$.
We already know that $Q_\aa{}^* x^* =\lim_{s\in T_\aa} P_s{}^*x^*$. Find a sequence $s_1\le s_2\le\cdots $ in $T_\alpha$ 
such that $\big\|Q_\aa{}^* x^* - P_{s_n}{}^*x^*\big\|<\frac1n$ for every $n\in\N$. Put $s:=\sup\{s_1,s_2,\ldots\}$; we know that $s\in
\Gamma_\alpha$. By \cite[Proposition 17.1]{kubisKniha}, $P_s{}^* x^* =\lim_{n\to\infty}P_{s_n}{}^*x^*$, 
and so we get that $Q_\aa{}^* x^* = P_s{}^*x^*$.  
Now, for every $y\in Q_\aa X$ we have 
\begin{eqnarray*}
\la H_s{}^*y^*,y\ra &=& \la y^*, H_s y\ra = \la y^*,P_s y\ra = \la x^*,P_sy\ra = \la P_s{}^* x^*,y\ra\\
 &=&\la Q_\aa{}^*x^*,y\ra = \la x^*, Q_\aa y\ra =\la x^*,y\ra = \la y^*,y\ra.
\end{eqnarray*}
Therefore, $y^*=H_s{}^*y^*\ \big(\in H_s{}^*(Q_\aa X)^*\big)$, and (ii) is verified.
(iii) follows immediately from the analogous property for the skeleton $(H_s: s\in\Gamma_\alpha)$.
As regards (iv), consider a sequence $s_1< s_n < \cdots$ in $\Gamma_\alpha$ and put $t:=\sup\{s_1,s_2, \ldots\}$. 
Take any
$y^*\in (Q_\aa X)^*$. Find $x^*\in X^*$ such that $x^*|_{Q_\aa X}= y^*$. Consider any $\ee>0$. Since $(P_s{}^*:\ s\in\Gamma)$ is a skeleton in $X^*$,
there are $n\in\N$ and $z^*\in X^*$ such that $\big\|P_t{}^*x^*-P_{s_{n}}{}^*z^*\big\|<\ee\,$.
An elementary calculation reveals that $\big\|H_t{}^* y^*- H_{s_{n}}{}^*(z^*|_{Q_\aa X}\big)\big\| \le
\big\|P_t{}^*x^*-P_{s_{n}}{}^*z^*\big\|\ (<\ee)$. Therefore $H_t{}^* y^* \in \overline{\bigcup_{n\in\N}H_{s_n}{}^*(Q_\aa X)^*}$
and (iv) is verified. Thus $\big(H_s:\ s\in\Gamma_\alpha\big)$ is a shrinking $r$-projectional skeleton on
$\big(Q_\aa X,\|\cdot\|\big)$.

From the induction assumption, by (b), we know that
the subspace $Q_\alpha X$ is WCG;
hence so is the subspace $(Q_{\aa+1}-Q_\aa)X \ \big(\!= (Q_{\aa+1}-Q_\aa)\circ Q_{\aa+1}X\big)$.
Let $K_\alpha$ be a weakly compact and linearly dense subset of $(Q_{\aa+1}-Q_\aa)X\cap B_X$.
Now, put $K:=\{0\}\cup\bigcup_{\alpha\in(\omega,\kappa)} K_\aa$. From the 
(well known) fact that, by (a),
$\bigcup_{\alpha\in(\omega,\kappa)}(Q_{\aa+1}{}^*-Q_\aa{}^*)X^*$ is linearly dense in $X^*$, we can easily conclude that $K$ is weakly compact. 
And, of course, $K$ is linearly dense in $X$. 
For more details, see the proof of \cite[Proposition 6.2.5 (i)]{f}. Therefore, $X$ is WCG.
\end{proof}

\bf Remark. \rm It should be noted that a ``PRI'' analogue of Theorem~\ref{t:dodatek} is true if the density
of $X$ is $\aleph_1$. However, it is easy to construct $X$, with density $\ge\aleph_2$,
such that it admits a shrinking PRI and yet $X$ is neither Asplund nor WCG. For instance,
$X:=\ell_2(\aleph_2)\times \ell_1(\aleph_1)$ is such.

\subsection*{Acknowledgments}
The first author is a junior researcher in the University Centre for Mathematical Modeling, Applied Analysis and Computational Mathematics (MathMAC) and was supported by  grant P201/12/0290. The second author was supported by grant P201/12/0290 and by RVO: 67985840.

The authors thank Ond\v rej Kalenda for discussions related to the topic of this note.

\end{document}